\newcommand\floor[1]{\lfloor#1\rfloor}
\newcommand\ceil[1]{\lceil#1\rceil}
\newcommand{\CC}{\mathbb{C}}
\newcommand{\BB}{\mathbb{B}}
\newcommand{\RR}{\mathbb{R}}
\newcommand{\NN}{\mathbb{N}}
\def\psd{\ensuremath{\textup{psd}}}
\def\psdc{\ensuremath{\psd^{\CC}}}
\newcommand{\rank}{\textup{rank}\,}
\newcommand{\rankplus}{\textup{rank}_+\,}
\newcommand{\rankpsd}{\textup{rank}_{\textup{psd}}}
\newcommand{\xc}{\textup{xc}}
\newcommand{\xcs}{\textup{xc}_{\psd}}
\newcommand{\xcb}{\textup{xc}_{\BB}}
\newcommand{\xcsc}{\textup{xc}_{\psd}^{\CC}}
\newcommand{\tr}{\textcolor{red}}
\def\rp{\ensuremath{\rank^{\CC}_{\psd}}}
\def\rankb{\ensuremath{\rank_{\BB}}}
\def\rankh{\ensuremath{\rank^{\textup{hom}}_{\BB}}}
\newcommand*\conj[1]{\overline{#1}}
\newtheorem{theorem}{Theorem}[section]
\newtheorem{conjecture}[theorem]{Conjecture}
\newtheorem{lemma}[theorem]{Lemma}
\newtheorem{corollary}[theorem]{Corollary}
\newtheorem{proposition}[theorem]{Proposition}
\theoremstyle{definition}
\newtheorem{definition}[theorem]{Definition}
\newtheorem{example}[theorem]{Example}
\theoremstyle{remark}
\newtheorem{remark}[theorem]{Remark}
\begin{document}
\title[On ranks of regular polygons]{On ranks of regular polygons}
\author[Goucha]{Ant\'onio Pedro Goucha}
\address{Department of Mathematics,  University of Coimbra, 3001-454 Coimbra, Portugal}{}
\email{apngoucha@student.uc.pt}

\author[Gouveia]{Jo\~ao Gouveia}
\address{CMUC, Department of Mathematics,
  University of Coimbra, 3001-454 Coimbra, Portugal}{}
\email{jgouveia@mat.uc.pt}

\author[Silva]{Pedro M. Silva}
\address{Department of Physics,  University of Coimbra, 3001-454 Coimbra, Portugal}{}
\email{pmsilva@student.fisica.uc.pt}

\thanks{Goucha was partially funded by a PhD scholarship from Funda{\c c}{\~ a}o para a Ci{\^ e}ncia e Tecnologia. Gouveia was partially supported by the Centre for Mathematics of the University of Coimbra -- UID/MAT/00324/2013, funded by the Portuguese
Government through FCT/MEC and co-funded by the European Regional Development Fund through the Partnership Agreement PT2020. Silva was partially supported through a scholarship of the program Novos Talentos em Matem\'atica of the Calouste Gulbenkian Foundation.}

\begin{abstract}
In this paper we study various versions of extension complexity for polygons through the study of factorization ranks of their slack matrices. In particular, we develop a new asymptotic lower bound for their nonnegative rank, shortening the gap between the current bounds, we introduce a new upper bound for their boolean rank, deriving from it some new numerical results, and we study their complex semidefinite rank, uncovering the possibility of non monotonicity of the ranks of regular $n$-gons.
\end{abstract}

\maketitle


\section{Introduction} \label{sec:introduction}

Polytopes play a central role in optimization, among other reasons because they are natural objects to represent a wide variety of optimization problems. Thus, a great interest has developed in studying the existence of efficient representations for them. In rough terms, the difficulty of optimizing over a polytope using a standard LP algorithm grows polynomially with its number of facets or vertices. To circumvent this fact, one can try to write a polytope as a linear image of a simpler, albeit higher dimensional, object. Depending on which objects we consider we get different measures of complexity.

Given a polytope $P$, a \emph{linear extension} of $P$ is a polytope $Q$ such that there exists a linear map $\pi$ with $\pi(Q)=P$. The \emph{size} of such an extension is defined to be the number of facets of $Q$ and we say that the \emph{(linear) extension complexity} of $P$, $\xc(P)$, is the smallest size of any linear extension of $P$.

Similarly, for a given polytope $P$, a \emph{semidefinite extension} of $P$ is a spectrahedron $S$ for which, again, there is a linear map $\pi$ with $\pi(S)=P$. Recall that a spectrahedron is simply a set of the form
$$S=\left\{ x \in \RR^k \textup{ s.t. } A_0+\sum_{i=1}^k x_i A_i \succeq 0\right\}$$
where $A_i$, $i=0,\cdots,k$, are real symmetric matrices and $M \succeq 0$ means $M$ is positive semidefinite. The \emph{size} of a semidefinite extension is the dimension of the matrices $A_i$ used in defining it, and the \emph{semidefinite extension complexity} of $P$, $\xcs(P)$ is the smallest possible size of any of its semidefinite extensions.

Finally, an analogous notion is that of a \emph{complex semidefinite extension} of a polytope $P$, which is obtained by using complex matrices in the spectrahedra definition instead of real as before. We now want to write $P$ as $\pi(T)$ for
$$T=\left\{ x \in \RR^k \textup{ s.t. } B_0+\sum_{i=1}^k x_i B_i \succeq 0\right\}$$
where $B_i$, $i=0,\cdots,k$, are hermitian matrices. The \emph{complex semidefinite extension complexity} of $P$, $\xcsc(P)$, is then defined in the analogous way to $\xcs(P)$.

As pointed out by Yannakakis in his seminal work \cite{yannakakis1988expressing}, the study of the extension complexity of a polytope can be done in terms of restricted factorizations of certain matricial representations of it. Given a polytope $P$, two common ways to represent it are listing its vertices, $p_1, \cdots, p_v$, usually said to be a $V$-representation of $P$, or listing linear inequalities corresponding to its facets, $h_1(x) \geq 0, \cdots, h_f(x) \geq 0$, usually said to be an $H$-representation of $P$. These two dual forms of representing a polytope can be combined into a matrix $S_P$, called the \emph{slack matrix} of $P$, defined as $S_P(i,j)=h_i(p_j)$.
The slack matrix of $P$ is defined only up to scaling of rows by positive factors, as the facet inequalities can be scaled. Two important things to keep in mind is that the slack matrix is nonnegative and it always has rank $d+1$, where $d$ is the dimension of $P$. A thorough study of such matrices can be found in \cite{gouveia2013nonnegative}.

To connect $S_P$ back to the extension complexity of $P$ one has to introduce some restricted factorizations and their respective ranks. Given a nonnegative $m \times n$ matrix $M$, a \emph{nonnegative factorization} of $M$ is a
factorization $M=AB^T$ with $A$ and $B$ nonnegative matrices. The \emph{size} of such factorization is the inner dimension of the matrix product, i.e. $k$ if $A$ is $m \times k$, and the \emph{nonnegative rank} of $M$, $\rankplus(M)$, is the smallest size of such a nonnegative factorization. In turn, a \emph{semidefinite factorization} of $M$ of size $k$ is a collection of $k \times k$ real positive semidefinite matrices $A_1,\cdots,A_m$ and $B_1, \cdots, B_n$ such that $M_{i,j}=\langle A_i,B_j \rangle$ for all entries $(i,j)$ of $M$, and a \emph{complex semidefinite factorization} is defined similarly with recourse to $A_i$ and $B_j$ complex semidefinite matrices. Here the inner product considered is the usual trace product $\langle A,B \rangle = \tr(B^*A)$.
As before, the \emph{semidefinite rank} of $M$, $\rankpsd(M)$ (respectively the \emph{complex semidefinite rank} of $M$, $\rp(M)$), is the smallest size of a semidefinite (respectively complex semidefinite) factorization of $M$.
Yannakakis result \cite{yannakakis1988expressing} for the linear extension complexity and its extension to general cones \cite{GPT} connect these notions back to extension complexity in a very elegant way.

\begin{theorem}
For any polytope $P$, $\xc(P)=\rankplus(S_P)$, $\xcs(P)=\rankpsd(S_P)$ and $\xcsc(P)=\rp(S_P)$.
\end{theorem}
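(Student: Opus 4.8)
The plan is to establish a single underlying equivalence at the level of abstract convex cones, and then read off all three identities by specializing to the three self-dual cones $\RR^k_+$, $\PSD_k$ and $\PSDC_k$. Write $P=\{x : h_i(x)\geq 0,\ i=1,\dots,f\}=\conv\{p_1,\dots,p_v\}$ with the $h_i$ affine, so that $S_P(i,j)=h_i(p_j)$. Given a closed convex cone $K$, call a factorization $S_P(i,j)=\langle A_i,B_j\rangle$ with $A_i\in K^*$ and $B_j\in K$ a $K$-factorization, and say $P$ has a $K$-lift if $P=\pi(L\cap K)$ for some affine subspace $L$ and linear map $\pi$. The core claim is that $P$ admits a $K$-lift of a given size if and only if $S_P$ admits a $K$-factorization of that size. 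Since $\RR^k_+$, $\PSD_k$ and $\PSDC_k$ are self-dual, and their size (the number $k$) matches exactly the inner factorization dimension defining $\rankplus$, $\rankpsd$ and $\rp$, the three stated identities follow once this equivalence is in hand.

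For the direction that builds a lift from a factorization, I would use the convexity of $P$ directly. Given $A_i\in K^*$, $B_j\in K$ with $h_i(p_j)=\langle A_i,B_j\rangle$, let $L$ be the affine subspace cut out in $(x,y)$-space by the relations $h_i(x)=\langle A_i,y\rangle$ for all $i$, and let $\pi$ be the projection onto $x$. If $x=\sum_j \lambda_j p_j$ is a convex combination, then $y=\sum_j \lambda_j B_j$ lies in $K$ and, since each $h_i$ is affine and $\sum_j \lambda_j=1$, satisfies $\langle A_i,y\rangle=\sum_j \lambda_j h_i(p_j)=h_i(x)$; hence $(x,y)\in L\cap(\RR^d\times K)$ and $P\subseteq \pi(L\cap(\RR^d\times K))$. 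Conversely, any $(x,y)$ in this slice has $h_i(x)=\langle A_i,y\rangle\geq 0$ because $A_i\in K^*$ and $y\in K$, forcing $x\in P$. This yields the reverse inclusion and hence the lift.

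The harder direction, producing a factorization from a lift, is where I expect the main obstacle to lie. Suppose $P=\pi(L\cap K)$. For each vertex $p_j$ choose a preimage $y_j\in L\cap K$ and set $B_j=y_j\in K$. For each facet, the affine functional $y\mapsto h_i(\pi(y))$ is nonnegative on $L\cap K$, and I want to certify this by an element $A_i\in K^*$ agreeing with it on $L\cap K$; then $\langle A_i,B_j\rangle=h_i(\pi(y_j))=h_i(p_j)=S_P(i,j)$, as required. Extracting such an $A_i$ is exactly a conic duality statement, resting on the description $(L\cap K)^*=\overline{K^*+L^\perp}$, and the delicate point is the closure: one must ensure the certificate genuinely lies in $K^*+L^\perp$ rather than only in its closure. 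I would handle this by restricting to the minimal face of $K$ meeting the relevant slice in its relative interior, where the duality is exact, so that a bona fide $A_i\in K^*$ is recovered; this is routine for the polyhedral cone and needs the standard facial-reduction care for the (complex) semidefinite cone.

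Finally, with the cone equivalence established, I would specialize. For $K=\RR^k_+$ a $K$-factorization is precisely a nonnegative factorization of inner dimension $k$ and a $\RR^k_+$-lift is precisely a linear extension described by $k$ inequalities, giving $\xc(P)=\rankplus(S_P)$; for $K=\PSD_k$ and $K=\PSDC_k$, self-duality identifies $K$-factorizations with (complex) semidefinite factorizations of matrix size $k$ and $K$-lifts with (complex) spectrahedral lifts, yielding $\xcs(P)=\rankpsd(S_P)$ and $\xcsc(P)=\rp(S_P)$.
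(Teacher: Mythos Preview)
The paper does not supply a proof of this theorem at all; it simply attributes the linear case to Yannakakis and the semidefinite (and, by the same mechanism, complex semidefinite) case to the cone-factorization framework of \cite{GPT}, and then states the result. So there is no ``paper's own proof'' to compare against.

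Your proposal is essentially the standard argument from \cite{GPT}: set up the abstract equivalence between $K$-lifts and $K$-factorizations of the slack matrix, and specialize to $K=\RR^k_+$, $\PSD_k$, $\PSDC_k$. The easy direction (factorization $\Rightarrow$ lift) is handled correctly. For the converse you correctly flag the one genuine subtlety, namely the closure in $(L\cap K)^*=\overline{K^*+L^\perp}$, and propose to deal with it via facial reduction. That is exactly what is needed; in \cite{GPT} this is phrased as requiring the lift to be \emph{proper} (the slice meets the relative interior of $K$), with the improper case reduced to a proper lift over a face of $K$ of no larger size. One small point worth making explicit when you match the definitions: the paper's ``linear extension of size $k$'' counts facets of an arbitrary polytope $Q$, not inequalities in a description, so you should note that any linear extension can be replaced by one whose facet description has exactly $k=f_{d-1}(Q)$ inequalities, which then corresponds to an $\RR^k_+$-lift in your sense; similarly the spectrahedron $\{x:A_0+\sum x_iA_i\succeq 0\}$ is affinely isomorphic to a slice $L\cap\PSD_k$ (or $L\cap\PSDC_k$). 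With those identifications spelled out, your outline is a faithful reconstruction of the cited proofs.
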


This connection between extension complexities and ranks has been recently explored in several groundbreaking results lower bounding the complexity of linear and semidefinite formulations of classic combinatorial problems. See for example \cite{fiorini2015exponential}, \cite{rothvoss2014matching} and \cite{LRS}. For simplicity, we will slightly abuse the definitions and refer to the extension complexities of polygons as ranks of polygons.

There are obvious relations among the factorization ranks defined above. For instance, for any nonnegative matrix $M$, we have $\rank(M) \leq \rankplus(M)$ and also $\rp(M) \leq \rankpsd(M) \leq \rankplus(M)$. Another important relation of the nonnegative rank is with yet another factorization rank, the \emph{boolean rank}. The boolean rank of a matrix $M$ of size $m \times n$, $\rankb(M)$, can be defined as the smallest $k$ for which one can find zero-one matrices $C$ and $D$ of sizes $m \times k$ and $n \times k$, such that $CD^T$ has the same support as $M$ i.e., the same set of nonzero entries. It is easy to see that $\rankb(M) \leq \rankplus(M)$ and this fact has been instrumental in many of the efforts in lower bounding the nonnegative rank of matrices. Incidentally, similarly to the other factorization ranks, one can interpret the boolean rank in terms of extension complexities. If we define the combinatorial extension complexity of a polytope $P$, $\xcb(P)$, as the smallest number of facets of a polytope $Q$ for which the face lattice of $P$ can be embedded in the face lattice of $Q$, then one can see that $\xcb(P)=\rankb(S_P)$ (see Corollary 2.13 of \cite{kaibel2013constructing}).

In this work we will focus on a restricted class of polytopes, polygons, and particularly regular polygons. This is in a sense the first non trivial class of polytopes one can study, and there were several recent important efforts in understanding their extension complexity. For regular $n$-gons there were several upper bounding results like \cite{ben2001polyhedral}, \cite{fiorini2012extended} and \cite{vandaele2015linear}, while in the general polygon case we also have the upper bounds in \cite{shitov2014upper} and \cite{1412.0728}. There is also interesting numerical evidence on the true nonnegative and semidefinite ranks of regular polygons presented in \cite{thesisvand} and \cite{vandaele2016}. Even with all these contributions, much is left to determine about the extension complexity of polygons, and this paper intends to be an addition to this growing set of results.

This paper is organized as follows. In Section \ref{sec:nonnegative} we focus on nonnegative rank. We provide a complete study of a geometric lower bound, providing the first asymptotic lower bound to the nonnegative rank of polygons that beats the trivial $\log_2(n)$ bound, and shortens the gap between upper and lower asymptotic bounds. After that we turn to the Boolean rank in Section \ref{sec:boolean}, defining a new upper bound for it and using it to numerically study $n$-gons for small $n$, in some cases determining their true boolean ranks. Finally, in Section \ref{sec:psd} we study the complex semidefinite rank of polygons, develop some tools to study the minimal cases and use them to uncover the possibility of nonmonotonicity of ranks of regular $n$-gons as $n$ grows.

\section{Nonnegative rank}\label{sec:nonnegative}

In this section we concern ourselves with an asymptotic study of the lower bounds for the nonnegative rank of polygons. The most common lower bounds for the nonnegative rank are combinatorial in nature and in most cases are actually lower bounds for the boolean rank. Examples of that are the trivial $\log_2(n)$ lower bound for the nonnegative rank of an $n$-gon, or its improvement
$$S(n)= \min \left\{k: n\leq \binom{k}{\floor{k/2}}\right\}.$$
When applied to polygons however these combinatorial bounds seem not to be very useful. $S(n)$, for example, can be shown to be asymptotically equivalent to $\log_2(n)$, i.e. $\lim S(n)/\log_2(n)=1$, while other common bounds turn out to be void of information, like the fooling number lower bound, that always equals $4$ for any polygon.

In this section we shorten the gap between this $\log_2(n)$ asymptotic lower bound to the $2\log_2(n)$ asymptotic upper bound proven in \cite{ben2001polyhedral}, conjectured in \cite{vandaele2015linear} to be the true value, by improving the lower bound to approximately $1.44 \log_2(n)$.

\subsection{Geometric lower bound}

The lack of effectiveness of the usual combinatorial bounds makes it necessary to introduce some geometric reasoning in order to accomplish some nontrivial result.
To this end we will resort to McMullen's Upper Bound Theorem.
Recall that the cyclic polytope $\mathcal{C}(n,d)$ is the convex hull of $n$ distinct points on the moment curve $\{(\tau, \tau^2,..., \tau^d)\, : \, \tau \in \RR\}$.

\begin{theorem}[Upper Bound Theorem \cite{MTK:6718596}]
 For fixed $n$ and $d$ the maximum number of $i$-faces for a $d$-polytope with $n$ vertices is attained by $\mathcal{C}(n,d)$, for $i=0,\cdots,d$. By duality,
 the maximum number of $i$-faces for a $d$-polytope with $n$ facets is attained by $\mathcal{C}^*(n,d)$, the dual of the cyclic polytope.
\end{theorem}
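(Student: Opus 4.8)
The plan is to reduce to the simplicial case and then control all the face numbers simultaneously through the $h$-vector. First I would observe that among all $d$-polytopes with $n$ vertices the maximum of each $f_i$ is attained by a \emph{simplicial} one: pulling the vertices into general position produces a simplicial $d$-polytope $P'$ with the same $n$ vertices and $f_i(P) \le f_i(P')$ for every $i$, since each face of $P$ is subdivided into at least one face of the same dimension in $P'$. This lets me assume from now on that the boundary complex $\partial P$ is a simplicial $(d-1)$-sphere with $n$ vertices, so that every facet is a $(d-1)$-simplex.

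For a simplicial $d$-polytope I would pass from the $f$-vector $(f_{-1}, f_0, \ldots, f_{d-1})$ to the $h$-vector $(h_0, \ldots, h_d)$ defined by $\sum_{i=0}^d f_{i-1}(t-1)^{d-i} = \sum_{i=0}^d h_i\, t^{d-i}$. Inverting this relation expresses each $f_{i-1}$ as a \emph{nonnegative} integer combination of $h_0, \ldots, h_i$, so it suffices to maximize each $h_i$. Two structural facts about $h$ then enter: the Dehn--Sommerville relations $h_i = h_{d-i}$, which follow from the Euler relation applied along a shelling of $\partial P$ and which reduce the task to bounding $h_i$ only in the range $0 \le i \le \floor{d/2}$.

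The heart of the argument, and the step I expect to be the main obstacle, is the inequality
$$h_i \le \binom{n-d-1+i}{i}, \qquad 0 \le i \le \floor{d/2}.$$
To prove it I would fix a line shelling of $\partial P$ and use the standard interpretation of $h_i$ as the number of facets whose shelling restriction has exactly $i$ vertices. Since each such restriction is an $i$-subset of the vertex set of a $(d-1)$-simplex drawn from the $n$ available vertices, an injective encoding of these restriction sets, valid precisely when $i \le \floor{d/2}$, yields the stated binomial bound. Getting this counting argument exactly right, in particular controlling how restriction sets of distinct facets may coincide, is the delicate combinatorial core of the theorem.

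Finally I would show that the cyclic polytope attains all these bounds. Using the Vandermonde determinant (equivalently, Gale's evenness condition) to describe which subsets of points on the moment curve span faces, one checks that $\mathcal{C}(n,d)$ is $\floor{d/2}$-neighborly, i.e. every set of at most $\floor{d/2}$ vertices is a face; for such a neighborly polytope the inequality above holds with \emph{equality} for all $i \le \floor{d/2}$. Combined with Dehn--Sommerville this shows $\mathcal{C}(n,d)$ maximizes every $h_i$, and hence every $f_i$, which is the vertex version. The facet version follows by applying the vertex statement to the polar dual, using that $\mathcal{C}^*(n,d)$ is dual to $\mathcal{C}(n,d)$ and that dualizing a $d$-polytope exchanges $i$-faces with $(d-1-i)$-faces.
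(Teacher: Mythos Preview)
The paper does not prove this theorem at all: it is quoted as McMullen's classical result with a citation, and is then used as a black box to derive the geometric lower bound $T(n)$. So there is no ``paper's own proof'' to compare against.

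What you have written is a reasonable outline of the standard modern proof of the Upper Bound Theorem: perturb to the simplicial case, pass to the $h$-vector, use Dehn--Sommerville to restrict attention to $i\le\lfloor d/2\rfloor$, bound $h_i$ by $\binom{n-d-1+i}{i}$, and verify that $\mathcal{C}(n,d)$ is $\lfloor d/2\rfloor$-neighborly and hence extremal. The one place where your sketch is genuinely incomplete is the sentence ``an injective encoding of these restriction sets, valid precisely when $i\le\lfloor d/2\rfloor$, yields the stated binomial bound.'' Restriction sets of distinct facets in a shelling \emph{are} distinct, but they are $i$-subsets of an $n$-element set, not of an $(n-d-1+i)$-element set, so distinctness alone gives only $h_i\le\binom{n}{i}$. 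Obtaining the sharp bound $\binom{n-d-1+i}{i}$ requires a further argument --- McMullen's original inductive inequality $h_i(\partial P)\le h_i(\partial P/v)+h_{i-1}(\partial P/v)$ over vertex links, or Stanley's commutative-algebra proof via the Cohen--Macaulayness of the Stanley--Reisner ring --- and this is exactly the nontrivial core you flag but do not supply.
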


Let $Q$ be a polytope with $m$ facets that is a linear extension of a $d$-polytope $P$. On the one hand, being a linear extension implies $f_0(Q) \geq f_0(P)$, on the other, the Upper Bound Theorem implies
$$f_0(Q)\leq f_0(\mathcal{C}^*(m,\dim(Q)))=f_{\dim(Q)-1}(\mathcal{C}(m,\dim(Q)))\leq \max_{2 \leq d \leq m-1} f_{d-1}(\mathcal{C}(m,d)).$$
These two facts combined allow us to lower bound the number of facets of any linear extension of $P$.
\begin{definition}
$T(v):= \min_{k}\{v \leq \max_{2 \leq d \leq k-1} f_{d-1}(\mathcal{C}(k,d))\}$.
\end{definition}
Keeping in mind the inequality above, we get $T(f_0(Q)) \leq m$. Since $f_0(P) \leq  f_0(Q)$, it follows that $T(f_0(P)) \leq T(f_0(Q)) \leq m$. In particular, $T(f_0(P)) \leq \rank_+(P)$. Note that this lower bound relates closely to the one introduced in \cite{vandaele2015linear} and is actually the same in the case of polygons.

Finally, we just note that as expected this lower bound can be strict. For $P$ a $9$-gon we have $T(9)=6$ and the bound gives us $\rank_+(P) \geq 6$. However, any polytope with $6$ facets and $9$ vertices is combinatorially equivalent to a product of triangles and in \cite{RORIG201279} it is proven that the projection of such a polytope to the plane has at most $8$ vertices, hence $\rank_+(P) \geq 7$.

\subsection{Asymptotic study}

The first step in order to study $T(n)$ is to find a simplified expression for it that avoids having to take the maximum.
The expression for $f_{d-1}(\mathcal{C}(k,d))$ can be found, for example, in \cite{grunbaum1967convex}.

\[
    \ f_{d-1}(\mathcal{C}(k,d))=
\begin{cases}
    \frac{k}{k-n}\binom{k-n}{n},& \text{if } d = 2n;\\
    2\binom{k-n-1}{n},              & \text{if } d = 2n+1.
\end{cases}
\]

We want to compute $\max_{2 \leq d \leq k-1} f_{d-1}(\mathcal{C}(k,d))$. In order to do that, we will separate the odd and even cases. For the odd case we can use
the following result.

\begin{proposition}[Tanny and Zuker \cite{tanny1974unimodal}]
 For fixed $n$, let $r_n$ be the smallest integer for which $\binom{n-r}{r}$ is maximal. Then, $r_n = \floor{\frac{1}{2}n(1-\frac{\sqrt{5}}{5})}$ or $r_n = \floor{\frac{1}{2}n(1-\frac{\sqrt{5}}{5}) + 1}$.
\end{proposition}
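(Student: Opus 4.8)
The plan is to regard $a_r := \binom{n-r}{r}$ as a sequence in $r$ for fixed $n$ (with $0 \le r \le \floor{n/2}$) and to locate its maximizers by comparing consecutive terms. Cancelling factorials gives
\[
\frac{a_{r+1}}{a_r} = \frac{\binom{n-r-1}{r+1}}{\binom{n-r}{r}} = \frac{(n-2r)(n-2r-1)}{(n-r)(r+1)},
\]
so, since the denominator is positive on the admissible range, $a_{r+1}$ is greater than, equal to, or less than $a_r$ exactly according to whether the quadratic $Q(r) := 5r^2 - (5n-3)r + (n^2 - 2n)$ is positive, zero, or negative. I would then note that $Q$ opens upward with roots
\[
r_\pm = \frac{(5n-3) \pm \sqrt{5n^2 + 10n + 9}}{10},
\]
and check $r_+ > n/2$ (equivalent to $5n^2+10n>0$), so that on $[0,n/2]$ one has $Q(r)>0$ precisely for $r<r_-$. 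This makes $a_r$ strictly unimodal — increasing up to and strictly decreasing past $r_-$ — whence the smallest maximizer is $r_n = \ceil{r_-}$; the tie $a_{r_-}=a_{r_-+1}$ can occur only when $r_-$ is an integer (as for $n=2$, where $5n^2+10n+9=49$), and then $\ceil{r_-}=r_-$ is still the smallest maximizer.

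It then remains to place $\ceil{r_-}$ relative to $\alpha n$, where $\alpha := \tfrac12\bigl(1-\tfrac{\sqrt5}{5}\bigr) = \tfrac{5-\sqrt5}{10}$. Writing $r_- - \alpha n = \tfrac{1}{10}\bigl(-3 + n\sqrt5 - \sqrt{5n^2+10n+9}\bigr)$ and rationalizing the difference of roots via
\[
n\sqrt5 - \sqrt{5n^2+10n+9} = \frac{-(10n+9)}{n\sqrt5 + \sqrt{5n^2+10n+9}},
\]
I would bound the denominator below by $2n\sqrt5$ to obtain $\bigl|n\sqrt5 - \sqrt{5n^2+10n+9}\bigr| < 5$ for every $n \ge 1$, and hence $-1 < r_- - \alpha n < 0$. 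Since $\alpha$ is irrational, $\alpha n$ is never an integer, so $r_- < \alpha n < \floor{\alpha n}+1$ and $r_- > \alpha n - 1 > \floor{\alpha n}-1$; threading these through the ceiling yields $\ceil{r_-} \in \{\floor{\alpha n},\, \floor{\alpha n}+1\}$, which is exactly the claimed dichotomy once one observes $\floor{\alpha n + 1} = \floor{\alpha n}+1$.

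The factorial cancellation and the root formula are routine; the part that needs care is the final paragraph. The main obstacle is to keep the estimate $r_- - \alpha n \in (-1,0)$ valid uniformly in $n$ rather than merely asymptotically, and to handle the strict-versus-nonstrict inequalities through $\floor{\cdot}$ and $\ceil{\cdot}$ so that both endpoints of the dichotomy genuinely arise (which value occurs being governed by the fractional part of $\alpha n$ against the shift $|r_- - \alpha n|$). A secondary point to dispatch cleanly is that the sequence really does begin by increasing, i.e.\ $Q(0) = n(n-2) > 0$ for $n > 2$, with the degenerate small cases $n \le 2$ checked by hand.
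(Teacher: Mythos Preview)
The paper does not actually prove this proposition: it is quoted as a result of Tanny and Zuker \cite{tanny1974unimodal} and used as a black box for the odd subsequence, so there is no ``paper's own proof'' to compare against. Your argument is correct and is in fact exactly the method the paper then deploys, in the Lemma immediately following, to handle the even subsequence $\frac{k}{k-n}\binom{k-n}{n}$: form the ratio of consecutive terms, reduce $a_{r+1}\gtrless a_r$ to the sign of a quadratic, locate the relevant root, and read off the maximizer as a ceiling. Your extra work in the final paragraph---rationalizing $n\sqrt5-\sqrt{5n^2+10n+9}$ and bounding it uniformly to pin $\lceil r_-\rceil$ down to the two values $\lfloor\alpha n\rfloor$ and $\lfloor\alpha n\rfloor+1$---is the part Tanny and Zuker supply and the paper simply imports; your estimates there check out (in fact one gets the sharper $r_--\alpha n\in(-0.8,-0.3)$).
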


Applying this to the general term of the odd subsequence, $2\binom{k-1-n}{n}$, we get that its maximizer is
 $n = \floor{\frac{1}{2}(k-1)(1-\frac{\sqrt{5}}{5})}$ or $n = \floor{\frac{1}{2}(k-1)(1-\frac{\sqrt{5}}{5}) + 1}$.
Denote it by $m_1(k)$. We proceed to study the even subsequence.

\begin{lemma}
The maximum of  $\frac{k}{k-n}\binom{k-n}{n}$, $n \in \mathbb{N}$ and $1\leq n\leq \frac{k-1}{2}$, is attained at $m_2(k)= \ceil{\frac{5k-4-\sqrt{5k^2-4}}{10}}$.
\end{lemma}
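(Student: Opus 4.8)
The plan is to treat $g(n)=\frac{k}{k-n}\binom{k-n}{n}$ as a discrete sequence in the integer variable $n$ and to locate its unique interior peak by studying the ratio of consecutive terms, a standard device for unimodal integer sequences. First I would simplify the general term so that the ratio is transparent. Using $\binom{k-n}{n}=\frac{(k-n)!}{n!\,(k-2n)!}$ and absorbing the factor $\frac{k}{k-n}$ into the factorial via $(k-n)!=(k-n)(k-n-1)!$ gives
\[
g(n)=k\,\frac{(k-n-1)!}{n!\,(k-2n)!}.
\]
This reduction is the key bookkeeping step: it removes the awkward rational prefactor and turns the consecutive-ratio into a product of falling/rising factors.

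Next I would compute $g(n+1)/g(n)$ directly from this closed form. The factorials telescope to
\[
\frac{g(n+1)}{g(n)}=\frac{(k-2n)(k-2n-1)}{(n+1)(k-n-1)},
\]
where one checks that all four factors are positive throughout the admissible range $1\le n\le \frac{k-1}{2}$ (so the ratio is well defined; at the odd-$k$ endpoint $n=\frac{k-1}{2}$ the factor $k-2n-1$ vanishes, forcing a final decrease). The sequence thus increases while this ratio exceeds $1$ and decreases once it drops below $1$. Clearing denominators, the condition $g(n+1)>g(n)$ is equivalent to $(k-2n)(k-2n-1)>(n+1)(k-n-1)$, which after expansion becomes the sign condition on the upward parabola $p(n):=5n^2+(4-5k)n+(k-1)^2$, with roots
\[
n_\pm=\frac{5k-4\pm\sqrt{5k^2-4}}{10}.
\]

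With the roots in hand, the argument is purely about placement. Since $p$ opens upward, $g$ is increasing exactly for $n<n_-$ within the range, and I would verify that the larger root satisfies $n_+=\frac{5k-4+\sqrt{5k^2-4}}{10}>\frac{k-1}{2}$, so it never interferes: on the whole interval $[1,\tfrac{k-1}{2}]$ the sequence rises then falls, with threshold $n_-$. Consequently the integer maximizer is $\ceil{n_-}$: the last increase is the step from $\floor{n_-}$ to $\floor{n_-}+1=\ceil{n_-}$, after which every term decreases. This is precisely $m_2(k)=\ceil{\frac{5k-4-\sqrt{5k^2-4}}{10}}$.

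The main obstacle I expect is not the ratio computation but the careful treatment of boundaries and ties. I would need to confirm that $n_-$ genuinely lies in the interior, i.e. $1\le n_-\le \frac{k-1}{2}$ for all relevant $k$ (asymptotically $n_-\approx \tfrac{5-\sqrt5}{10}k\approx 0.276\,k$, comfortably interior, but the first few small values of $k$ should be inspected by hand), so that the maximizer is not pushed to an endpoint. The other delicate point is the degenerate case in which $5k^2-4$ is a perfect square and $n_-$ is itself an integer: then $p(n_-)=0$, the ratio equals $1$, and $g(n_-)=g(n_-+1)$, producing a tie. Here one simply notes that $\ceil{n_-}=n_-$ remains a valid maximizer, so the stated formula still holds; I would flag this explicitly rather than let it silently pass.
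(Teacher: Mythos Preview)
Your proposal is correct and follows essentially the same route as the paper: both study the ratio $g(n+1)/g(n)=\dfrac{(k-2n)(k-2n-1)}{(n+1)(k-n-1)}$, reduce the comparison with $1$ to the quadratic $5n^2+(4-5k)n+(k-1)^2$, observe that the larger root exceeds the right endpoint so that only the smaller root $n_-$ matters, and conclude that the maximizer is $\lceil n_-\rceil$. Your preliminary rewriting $g(n)=k\,\dfrac{(k-n-1)!}{n!\,(k-2n)!}$ and your explicit discussion of the tie case and boundary placement are a bit more thorough than the paper's version, but the argument is the same.
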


\begin{proof}

Let $t_n = \frac{k}{k-n}\binom{k-n}{n}$. We will study the ratio between consecutive terms in order to study its monotony
$$
q(n)=\frac{t_{n+1}}{t_n}=\frac{\frac{k}{k-(n+1)}\binom{k-(n+1)}{n+1}}{\frac{k}{k-n}\binom{k-n}{n}} = \frac{(k-2n)(k-2n-1)}{(n+1)(k-n-1)}.
$$
The ratio $q(n)$ is defined for $1\leq n\leq \frac{k-3}{2}$ and we need to determine when is it less than or equal to $1$. We will study its canonical real extension to the interval $[1, \frac{k-3}{2}]$, since it contains all our interest points. Since in this interval the denominator never vanishes, by expanding
$$q(x)=\frac{(k-2x)(k-2x-1)}{(x+1)(k-x-1)} \leq 1$$
one can easily check that it is equivalent to
$$5x^2+(4-5k)x+(k-1)^2 \leq 0.$$
The polynomial $P(x)=5x^2+(4-5k)x+(k-1)^2$ has roots $r_1 = \frac{5k - 4 - \sqrt{5k^2 - 4}}{10}$ and $r_2 = \frac{5k - 4 + \sqrt{5k^2 - 4}}{10}$ and is nonpositive precisely in $[r_1, r_2]$. Since $r_2 > \frac{k-3}{2}$, for any integer $n$ in the domain interval $P(n)>0$ if and only if $n<r_1$, hence $t_{n+1} > t_n$ if and only if $n<r_1$. We conclude that the maximizer of $t_n$ coincides with the least integer belonging to $[r_1; r_2]$, that is to say, $\ceil{r_1}$.
\end{proof}

We now just have to compare the values of the maxima of both subsequences.

\begin{corollary}
If $k\neq5$, $\max_{2 \leq d \leq k-1} f_{d-1}(\mathcal{C}(k,d)) = \frac{k}{k-m_2(k)}\binom{k-m_2(k)}{m_2(k)}$.
\end{corollary}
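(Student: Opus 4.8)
The quantity $\max_{2\le d\le k-1} f_{d-1}(\mathcal{C}(k,d))$ is the larger of the two subsequence maxima: the even maximum $\frac{k}{k-m_2(k)}\binom{k-m_2(k)}{m_2(k)}$, located by the Lemma, and the odd maximum $2\binom{k-m_1(k)-1}{m_1(k)}$, located by the Proposition. Writing $e_n=\frac{k}{k-n}\binom{k-n}{n}$ for the even term and $o_n=2\binom{k-n-1}{n}$ for the odd term, the corollary is exactly the inequality $e_{m_2(k)}\ge o_{m_1(k)}$, and the plan is to prove this for $k\neq 5$.

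The engine of the argument is an exact comparison between the even and odd terms at the \emph{same} index. Expanding both into factorials and cancelling, I would establish the identity
\[
\frac{e_n}{o_n}=\frac{\frac{k}{k-n}\binom{k-n}{n}}{2\binom{k-n-1}{n}}=\frac{k}{2(k-2n)},
\]
valid for every admissible $n$. The immediate consequence is that $e_n\ge o_n$ precisely when $n\ge k/4$. Hence, provided the odd maximizer satisfies $m_1(k)\ge k/4$, one gets
\[
o_{m_1(k)}=e_{m_1(k)}\cdot\frac{2\,(k-2m_1(k))}{k}\le e_{m_1(k)}\le e_{m_2(k)},
\]
the first inequality using $m_1(k)\ge k/4$ and the second using that $m_2(k)$ maximizes the even subsequence. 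So the entire corollary reduces to the single claim $m_1(k)\ge k/4$.

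To prove $m_1(k)\ge k/4$ I would show that the odd sequence is still increasing at every integer below $k/4$. The ratio of consecutive odd terms is
\[
\frac{o_{n+1}}{o_n}=\frac{(k-2n-1)(k-2n-2)}{(n+1)(k-n-1)},
\]
whose numerator is decreasing and denominator increasing on $[0,(k-2)/2]$; the ratio is therefore decreasing there, so it suffices to check that it exceeds $1$ at the largest relevant index $n_0=\ceil{k/4}-1$. This reduces to the positivity of $R(n_0)=(k-2n_0-1)(k-2n_0-2)-(n_0+1)(k-n_0-1)$, which I would evaluate according to the residue of $k$ modulo $4$. The four substitutions give clean quadratics in $k$ — for instance $R=\frac{(k-1)(k-21)}{16}$ when $k\equiv 1\pmod 4$ and $R=\frac{(k-3)(k+1)}{16}$ when $k\equiv 3\pmod 4$ — each nonnegative once $k$ is large enough. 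Monotonicity up to $\ceil{k/4}$ then forbids any index below $\ceil{k/4}$ from being a maximizer, yielding $m_1(k)\ge\ceil{k/4}\ge k/4$.

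The \emph{main obstacle}, and the source of the exceptional value, is the handful of small $k$ for which these quadratics are negative — the residue classes $1$ and $2$ modulo $4$ below their respective thresholds, together with the degenerate cases $k\le 4$. For each such $k$ I would simply tabulate the finitely many values of $e_n$ and $o_n$ and compare the two maxima directly. This finite verification confirms $e_{m_2(k)}\ge o_{m_1(k)}$ in every remaining case except $k=5$, where $m_1(5)=1<5/4$ and the odd maximum $6$ strictly exceeds the even maximum $5$ — precisely why that value must be excluded from the statement.
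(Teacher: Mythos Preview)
Your argument is correct and follows the same architecture as the paper's: the identity $e_n=\frac{k}{k-2n}\binom{k-1-n}{n}$ (equivalently $e_n/o_n=\frac{k}{2(k-2n)}$), the reduction of the whole statement to the claim $m_1(k)\ge k/4$, and a finite check for the remaining small $k$. The one genuine difference is how that claim is established. The paper simply invokes the Tanny--Zuker closed form for $m_1(k)$ and observes that $\tfrac12(k-1)\bigl(1-\tfrac{\sqrt5}{5}\bigr)-1\ge k/4$ once $k\ge 49$, leaving $k\le 48$ to numerical verification. You instead bypass the closed form entirely, proving from the consecutive-term ratio that $o_n$ is still increasing at $n=\lceil k/4\rceil-1$; the residue analysis mod $4$ then gives sharper thresholds (the worst case being $k\equiv 1\pmod 4$, where positivity requires $k\ge 21$), so your finite check is smaller. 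Your route is slightly more self-contained, the paper's is shorter once Tanny--Zuker is in hand; both arrive at the same exception $k=5$.
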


\begin{proof}
We start by relating the expressions of both subsequences:
$$
\frac{k}{k-n}\binom{k-n}{n} = \frac{k(k-n-1)!}{n!(k-2n)!} = \frac{k}{k-2n}\binom{k-1-n}{n}.
$$
If $n \geq \frac{k}{4}$, we have
$$
\frac{k}{k-n}\binom{k-n}{n} = \frac{k}{k-2n}\binom{k-1-n}{n} \geq 2\binom{k-1-n}{n}.
$$
Since $m_2(k)$ maximizes the original expression, when $m_1(k) \geq \frac{k}{4}$ we attain from the inequality above
\begin{align*}
\frac{k}{k-m_2(k)}\binom{k-m_2(k)}{m_2(k)} \geq \frac{k}{k-m_1(k)}\binom{k-m_1(k)}{m_1(k)} \geq 2\binom{k-1-m_1(k)}{m_1(k)}.
\end{align*}

To guarantee $m_1(k) \geq \frac{k}{4}$, it is enough that $\frac{1}{2}(k-1)(1-\frac{\sqrt{5}}{5}) - 1 \geq \frac{k}{4}$, which happens for $k\geq 49$. Computing numerically $\max_{2 \leq d \leq k-1} f_{d-1}(\mathcal{C}(d,k))$ for $ k \leq 48$, we can see that it is always attained in $m_2(k)$, except for $k=5$ which is attained in $m_1(k)$.
\end{proof}

For asymptotic reasonings one can then consider $$T(n)= \min_{k}\left\{n \leq \frac{k}{k-m_2(k)}\binom{k-m_2(k)}{m_2(k)}\right\}.$$
Note that this expression now presents many similarities to that of $S(n)$. The next step is to reduce the asymptotic study of $T(n)$ to that of a subsequence of $T(n)$ with an easier expression to work with, so that we can get rid of the minimization in the definition.

\begin{lemma}\label{lem:Tsubsucessao} Let $s_n=\frac{n}{n-m_2(n)}\binom{n-m_2(n)}{m_2(n)}$. Then,
$\lim_{n} \frac{T(n)}{\log_2(n)}$ exists if and only if $\lim_{n} \frac{T(s_n)}{\log_2(s_n)}$ does, in which case they are equal.
\end{lemma}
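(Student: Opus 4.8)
The plan is to exploit the fact that $T$ is essentially the inverse of the sequence $(s_n)$. The key structural observations are that $(s_n)$ is a strictly increasing sequence of positive integers and that $s_n \to \infty$; granting these, one gets $T(s_n) = \min\{k : s_k \geq s_n\} = n$, so that the ratio $T(s_n)/\log_2(s_n)$ is simply $n/\log_2(s_n)$. The whole statement then reduces to a squeeze argument comparing a general argument $m$ to the two bracketing terms of the subsequence $(s_n)$.

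First I would verify the two structural facts. Divergence is immediate: taking $d=2$ gives $s_n \geq f_1(\mathcal{C}(n,2)) = n \to \infty$. For strict monotonicity I would use that, for each fixed even dimension $2j$, the facet count $\frac{n}{n-j}\binom{n-j}{j}$ of $\mathcal{C}(n,2j)$ is strictly increasing in the number of vertices $n$; evaluating at $j=m_2(n)$ and then relaxing to the maximum over $j$ yields
\[
s_{n+1} = \max_j \frac{n+1}{n+1-j}\binom{n+1-j}{j} \geq \frac{n+1}{n+1-m_2(n)}\binom{n+1-m_2(n)}{m_2(n)} > \frac{n}{n-m_2(n)}\binom{n-m_2(n)}{m_2(n)} = s_n.
\]
As a consequence, for every sufficiently large integer $m$ there is a unique $n$ with $s_{n-1} < m \leq s_n$, and for this $n$ one has $T(m) = n$ exactly, since $s_k \leq s_{n-1} < m$ rules out every $k \leq n-1$ while $s_n \geq m$ makes $k=n$ admissible.

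Next I would carry out the squeeze. For $m$ in the window $s_{n-1} < m \leq s_n$, taking base-$2$ logarithms and dividing by $\log_2 m$ gives
\[
\frac{n}{\log_2 s_n} \;\leq\; \frac{T(m)}{\log_2 m} \;=\; \frac{n}{\log_2 m} \;<\; \frac{n}{\log_2 s_{n-1}} \;=\; \frac{n}{n-1}\cdot\frac{T(s_{n-1})}{\log_2 s_{n-1}}.
\]
The left-hand side equals $T(s_n)/\log_2 s_n$. Assuming $\lim_n T(s_n)/\log_2 s_n = L$, the left bound tends to $L$, while the right bound tends to $1\cdot L = L$ because $n/(n-1) \to 1$ and the shifted ratio at index $n-1$ has the same limit $L$. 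Since $m \to \infty$ forces $n \to \infty$, the squeeze yields $\lim_m T(m)/\log_2 m = L$. The converse implication is trivial: because $(s_n) \subseteq \NN$ tends to infinity, $\bigl(T(s_n)/\log_2 s_n\bigr)$ is a subsequence of $\bigl(T(m)/\log_2 m\bigr)$ and hence inherits its limit.

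I expect the only genuine work to lie in the monotonicity-and-divergence step, since everything afterwards is a routine squeeze. The single point demanding a little care is the off-by-one in the upper bound, but it is harmless: the extra factor $n/(n-1)$ is asymptotically negligible, so it does not disturb the common limit.
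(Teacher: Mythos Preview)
Your proof is correct and follows essentially the same route as the paper: both identify $T(s_n)=n$ from the strict monotonicity of $(s_n)$, then squeeze $T(m)/\log_2 m$ between the values of $T(s_\cdot)/\log_2 s_\cdot$ at the two bracketing indices. The only cosmetic difference is that the paper handles the upper bound additively as $\frac{T(s_k)}{\log_2 s_k}+\frac{1}{\log_2 s_k}$ whereas you use the multiplicative factor $\frac{n}{n-1}$, and your verification of strict monotonicity via $\frac{n}{n-j}\binom{n-j}{j}$ increasing in $n$ is more explicit than the paper's appeal to the combinatorial interpretation.
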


\begin{proof}
For the direct implication just note that $s_n$ is, by construction, the maximum number of facets of a polytope with $n$ vertices and hence easily seen to be increasing. Therefore $\frac{T(s_n)}{\log_2(s_n)}$ is a subsequence of $\frac{T(n)}{\log_2(n)}$ and converges if this one does.

For the reverse implication note that $T(s_n)=n$, since $s_n$ is increasing. Then, for $s_k < n \leq s_{k+1}$, we have
\begin{align*}
\frac{T(n)}{\log_2(n)} = \frac{k+1}{\log_2(n)} = \frac{T(s_{k+1})}{\log_2(n)} \geq \frac{T(s_{k+1})}{\log_2(s_{k+1})}
\end{align*}
and
\begin{align*}
\frac{T(n)}{\log_2(n)} =  \frac{T(s_{k})}{\log_2(n)} + \frac{1}{\log_2(n)} \leq  \frac{T(s_{k})}{\log_2(s_k)} + \frac{1}{\log_2(s_k)}.
\end{align*}
If the limit of  $\frac{T(s_n)}{\log_2(s_n)}$ exists, taking limits we conclude that the limit of $\frac{T(n)}{\log_2(n)}$ is the same.
\end{proof}

This result reduces the asymptotic study of $T(n)$ to that of $\frac{n}{\log_2(s_n)}$. To do that there is a classic tool that we can use, Stirling's approximation, that states
$$\ln(n!) = n \ln n - n + \mathcal{O}(\ln n).$$
Applying it to binomial coefficients we get the well-known approximation
$$\ln(\binom{n}{m}) =  n\ln n + \mathcal{O}(\ln n) - m\ln m + \mathcal{O}(\ln m)- (n-m)\ln(n-m) - \mathcal{O}(\ln(n-m)).$$
With this result we can now prove the intended result.

\begin{corollary}
The sequence $T(n)$ is asymptotically equivalent to $\log_\phi (n)$, where $\phi$ is the golden ratio.
\end{corollary}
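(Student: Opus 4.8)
The plan is to turn the corollary into an exponential-growth-rate computation for the sequence $s_n$. By Lemma~\ref{lem:Tsubsucessao} it suffices to show that $\lim_n \frac{T(s_n)}{\log_2(s_n)} = \frac{1}{\log_2\phi}$, and since $s_n$ is strictly increasing we have $T(s_n)=n$, as observed in the proof of that lemma. Thus the whole problem collapses to evaluating $\lim_n \frac{n}{\log_2(s_n)}$, i.e. to determining the constant $H$ for which $\ln s_n = Hn + \mathcal{O}(\ln n)$, after which the answer will be $\frac{\ln 2}{H}$.

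First I would pin down the asymptotics of $m_2(n)$. Expanding $\sqrt{5n^2-4}=\sqrt5\,n+\mathcal{O}(1/n)$ in $m_2(n)=\ceil{\frac{5n-4-\sqrt{5n^2-4}}{10}}$ gives $m_2(n)=\alpha n+\mathcal{O}(1)$ with $\alpha=\frac12\left(1-\frac{1}{\sqrt5}\right)=\frac{5-\sqrt5}{10}$. In $s_n=\frac{n}{n-m_2(n)}\binom{n-m_2(n)}{m_2(n)}$ the prefactor tends to the constant $\frac{1}{1-\alpha}$, so its logarithm is $\mathcal{O}(\ln n)$ and can be discarded. Writing $m=m_2(n)$ and applying the stated Stirling expansion to $\binom{n-m}{m}$, and noting that shifting $m$ by $\mathcal{O}(1)$ changes $\ln\binom{n-m}{m}$ by only $\mathcal{O}(\ln n)$ (so that $m$ may be replaced by $\alpha n$ up to that error), I obtain
$$\ln s_n = (1-\alpha)n\ln\!\big((1-\alpha)n\big)-\alpha n\ln(\alpha n)-(1-2\alpha)n\ln\!\big((1-2\alpha)n\big)+\mathcal{O}(\ln n).$$
The coefficient of $n\ln n$ is $(1-\alpha)-\alpha-(1-2\alpha)=0$, so those terms cancel and $\ln s_n=Hn+\mathcal{O}(\ln n)$ with $H=(1-\alpha)\ln(1-\alpha)-\alpha\ln\alpha-(1-2\alpha)\ln(1-2\alpha)$.

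The decisive step is to show $H=\ln\phi$. Here the golden ratio $\phi=\frac{1+\sqrt5}{2}$ enters through the identities $1-2\alpha=\frac{1}{\sqrt5}$, $\ \alpha=\frac{1}{\phi\sqrt5}$ and $1-\alpha=\frac{\phi}{\sqrt5}$, which follow from $\phi^{-1}=\phi-1$. Splitting each logarithm into its $\ln\phi$ and $\ln 5$ contributions, the coefficient of $\ln\phi$ equals $\frac{1}{\sqrt5}\left(\phi+\phi^{-1}\right)=1$ because $\phi+\phi^{-1}=\sqrt5$, while the coefficient of $\ln 5$ equals $\frac{1}{2\sqrt5}\left(-\phi+\phi^{-1}+1\right)=0$ because $\phi^{-1}=\phi-1$. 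Hence $H=\ln\phi$, and therefore $\frac{n}{\log_2 s_n}\to\frac{\ln 2}{\ln\phi}=\frac{1}{\log_2\phi}$. Combined with Lemma~\ref{lem:Tsubsucessao} this yields $\frac{T(n)}{\log_2 n}\to\frac{1}{\log_2\phi}$, that is $T(n)\sim\log_\phi n$.

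I expect the main obstacle to be bookkeeping rather than ideas: the exact vanishing of the $\ln 5$ coefficient is what makes the entropy constant $H$ collapse to the clean value $\ln\phi$, and this hinges on the golden-ratio relations above, so it is worth verifying symbolically. The only genuinely delicate analytic point is confirming that the integrality of $m_2(n)$ (the ceiling) perturbs $\ln s_n$ by no more than $\mathcal{O}(\ln n)$, which is guaranteed by the logarithmic size of the discrete difference of $\ln\binom{n-m}{m}$ in $m$; everything else is a routine Stirling estimate.
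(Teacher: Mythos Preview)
Your proposal is correct and follows essentially the same route as the paper: reduce via Lemma~\ref{lem:Tsubsucessao} to computing $\lim_n \frac{\log_2 s_n}{n}$, discard the bounded prefactor $\frac{n}{n-m_2(n)}$, apply Stirling to $\binom{n-m_2(n)}{m_2(n)}$, replace $m_2(n)$ by its linear asymptotic $\alpha n$, and evaluate the resulting entropy constant. The only difference is presentational: where the paper substitutes $r(n)=\frac{1}{2}n(1-\frac{\sqrt5}{5})$ and then says ``simplifying we can see that this is precisely $\log_2(\phi)$'', you make that simplification explicit through the identities $1-2\alpha=\tfrac{1}{\sqrt5}$, $\alpha=\tfrac{1}{\phi\sqrt5}$, $1-\alpha=\tfrac{\phi}{\sqrt5}$ and the relations $\phi+\phi^{-1}=\sqrt5$, $\phi^{-1}=\phi-1$, which is arguably cleaner.
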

\begin{proof}
By Lemma \ref{lem:Tsubsucessao}, studying $\lim_{n} \frac{\log_2(n)}{T(n)}$, is the same as studying $\lim_{n} \frac{\log_2(s_n)}{n}$. Furthermore,
$$\log_2\left(s_n\right) = \frac{1}{\ln 2} \left(\ln \left( \frac{n}{n-m_2(n)}\right)+\ln \left(\binom{n-m_2(n)}{m_2(n)}\right) \right),$$
thus
\begin{align*}
\lim_{n} \frac{\log_2(s_n)}{n} =
\frac{1}{\ln 2} \lim_{n} \frac{\ln(\binom{n-m_2(n)}{m_2(n)})}{n},
\end{align*}
since $\frac{\ln(\frac{n}{n-m_2(n)})}{n} < \frac{\ln n}{n}$.

Using the approximation for $\ln(\binom{n}{m})$ in the expression for $\ln(\binom{n-m_2(n)}{m_2(n)})$, and noting that $\frac{\mathcal{O}(\ln (n-m_2(n)))}{n}$,
$\frac{\mathcal{O}(\ln(m_2(n)))}{n}$ and $\frac{\mathcal{O}(\ln(n-2m_2(n)))}{n}$ all go to zero as $n$ goes to infinity, one gets that $\lim_{n} \frac{\log_2(s_n)}{n}$ is the sum of
$$\frac{1}{\ln 2}\lim_{n} \frac{(n-m_2(n))\ln(n-m_2(n))}{n}$$
and
$$\frac{1}{\ln 2}\lim_{n} \frac{-m_2(n)\ln(m_2(n))-(n-2m_2(n))\ln(n-2m_2(n))}{n}.$$

To compute these, recall that $m_2(n)= \ceil{\frac{5n-4-\sqrt{5n^2-4}}{10}}$ thus $m_2(n) \sim r(n)$,with $r(n)=\frac{1}{2}n(1-\frac{\sqrt{5}}{5})$. Likewise, $n-m_2(n) \sim n-r(n)$ and $n-2m_2(n)\sim n-2r(n)$. Therefore the limit we are interested in is
\begin{align*}
\frac{1}{\ln 2} \lim_{n} \frac{(n-r(n))\ln(n-r(n))-r(n)\ln(r(n))-(n-2r(n))\ln(n-2r(n))}{n}.
\end{align*}
Replacing $r(n)$ by its expression one gets
\begin{align*}
\frac{1}{\ln 2} \lim_{n} \frac{\frac{n}{2}(1+\frac{\sqrt{5}}{5})\ln(\frac{1}{2}+\frac{\sqrt{5}}{10})-\frac{n}{2}(1-\frac{\sqrt{5}}{5})\ln(\frac{1}{2}-\frac{\sqrt{5}}{10})-n\frac{\sqrt{5}}{5}\ln(\frac{\sqrt{5}}{5})}{n}.
\end{align*}
Simplifying we can see that this is precisely $\log_2(\phi)$, where $\phi$ is the golden ratio. Therefore
$$\lim_{n} \frac{T(n)}{\log_2(n)} = \frac{1}{\log_2(\phi)} = \log_\phi(2),$$
 and $\lim_{n} \frac{T(n)}{\log_\phi(n)} = 1$, as intended.
\end{proof}

With this result we now know the asymptotic lower bound for the extension complexity of an $n$-gon of $\log_\phi(2) \log_2(n)$, which is approximately $1.44 \log_2(n)$.
This gives some convincing evidence that purely combinatorial tools to lower bound nonnegative ranks can be massively improved even by the simple addition of basic geometric reasoning. Closing the gap to the $2 \log_2(n)$ upper bound will likely need a much more sophisticated reasoning.

\section{Boolean Rank}\label{sec:boolean}

The fact that the combinatorial lower bounds are not effective in lower bounding the nonnegative rank of polygons seems to suggest that the boolean rank is actually asymptotically lower than the nonnegative rank. In fact, there are no reasons to believe that the boolean rank of an $n$-gon is not asymptotically $\log_2(n)$, exactly as its trivial lower bound. There are, however, no effective tools to upper bound the boolean rank, and very little numerical evidence either way. This fact leads us in this section to take inspiration from \cite{barefoot1986biclique} to formally introduce a new upper bound for the boolean rank of a polygon, and to do some numerical experiments that improve and expand those presented in that paper.

\subsection{Homogeneous boolean rank of a polygon}

The paper \cite{barefoot1986biclique} studies the boolean rank of both polygons and a family of related matrices. It derives a few basic properties and carries out some numerical experiments. In order to make part of what is done there a little more concrete, we will introduce a new definition that gives a restricted version of the boolean rank. The main idea is to restrict the rows of the matrices used in the boolean factorization to have supports of fixed cardinality, which allows us to look into a much smaller set of possible factorizations. In order to have as much choice as possible, we will set the supports to have cardinality as close to half the size of the rows as possible.

\begin{definition}\label{def:rankhomogeneous}
A \emph{homogeneous boolean factorization} of size $k$ of the slack matrix $S_n$ of an $n$-gon is a boolean factorization of the type $C_{n \times k}\times D^T_{n \times k}$ where the rows of $C$ have precisely $\floor{\frac{k}{2}}$ ones and those of $D$ precisely $\ceil{\frac{k}{2}}-1$ ones.
The \emph{homogeneous boolean rank} of $S_n$, $\rankh(S_n)$, is the smallest $k$ for which such a factorization exists.
\end{definition}

The first thing to notice is that for any $n$-gon its homogeneous boolean rank is finite. To see this just note that one can pad the trivial factorization $S_n=S_n \times I_n$ to obtain $$S_n = [0_{n \times n-3} S_n] \times [1_{n \times n-3} I_n]^T ,$$
which is a homogeneous boolean factorization of size $2n-3$, hence $\rankh(S_n) \leq 2n-3$. This reasoning depends only on the fact that all rows of $S_n$ have precisely the same number of zeroes, and we could easily extend this homogeneous boolean rank definition to $d$-dimensional simplicial polytopes by demanding $\floor{\frac{k}{2}}-\floor{\frac{d}{2}}+1$ and $\ceil{\frac{k}{2}}+\ceil{\frac{d}{2}}$ ones in the rows of $C$ and $D$ respectively. Our aim in this paper is however limited only to polygons so we will only work with Definition \ref{def:rankhomogeneous}.

The second thing to notice is that trivially $\rankh(S_n) \geq \rankb(S_n)$. However in \cite{barefoot1986biclique} a stronger relationship is suggested.

\begin{conjecture}\label{conj:homog}
For any $n$, $\rankh(S_n) = \rankb(S_n)$.
\end{conjecture}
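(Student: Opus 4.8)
The plan is to pass to the set-system description of boolean factorizations and then argue that any optimal unrestricted factorization can be \emph{balanced} into a homogeneous one without enlarging the ground set. Writing the slack matrix of the $n$-gon in its circulant form, row $i$ (the facet through vertices $i$ and $i+1$) vanishes exactly in columns $i$ and $i+1$ (indices mod $n$). Thus a boolean factorization $CD^T$ of size $k$ is the same datum as a choice of supports $A_i=\supp(C_{i,\cdot})\subseteq[k]$ and $B_j=\supp(D_{j,\cdot})\subseteq[k]$ subject to
$$A_i\cap B_j=\emptyset\iff j\in\{i,i+1\}\pmod n,$$
and a homogeneous factorization is one with the extra constraints $|A_i|=\floor{k/2}$ and $|B_j|=\ceil{k/2}-1$ for all $i,j$. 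Since the homogeneous condition is a restriction, $\rankh(S_n)\geq\rankb(S_n)$ is immediate, and the whole content is the reverse inequality: starting from an optimal factorization with $k=\rankb(S_n)$, I would produce a homogeneous one of the same size.

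The engine of the argument would be an asymmetry between enlarging and shrinking the sets. Adding to $A_i$ any element of $[k]\setminus(B_i\cup B_{i+1})$ leaves the whole support pattern intact: the two forbidden intersections $A_i\cap B_i$ and $A_i\cap B_{i+1}$ stay empty because the new element avoids $B_i\cup B_{i+1}$, while every required intersection can only grow. Symmetrically, one may add to $B_j$ any element of $[k]\setminus(A_{j-1}\cup A_j)$. \emph{Enlarging is therefore always safe}, and the only delicate moves are removals, which can destroy a required intersection and are admissible only when the removed coordinate is not the unique common element of $A_i$ and $B_j$ for some non-forbidden pair. The strategy is then to drive all $|A_i|$ and $|B_j|$ to their target values by a sequence of safe enlargements, using certified removals only to create room when the ground set is locally saturated.

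To organize these moves I would exploit the cyclic $\mathbb{Z}_n$ symmetry of $S_n$ together with a counting potential. A homogeneous factorization satisfies $\sum_i|A_i|+\sum_j|B_j|=n\bigl(\floor{k/2}+\ceil{k/2}-1\bigr)=n(k-1)$, so the first goal would be to show that an optimal factorization may be assumed to satisfy $|A_i|+|B_i|=|A_i|+|B_{i+1}|=k-1$ for every $i$, i.e.\ each forbidden pair covers all but one coordinate of $[k]$, by filling any further slack $[k]\setminus(A_i\cup B_i)$ one element at a time into whichever of $A_i,B_i$ is legal. Once every forbidden pair covers $k-1$ coordinates, the individual sizes are constrained to average to $\floor{k/2}$ and $\ceil{k/2}-1$, and a global parity argument around the cycle would be used to pin them to these uniform values.

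The main obstacle is exactly the step where enlargement is blocked: a slack coordinate $x\in[k]\setminus(A_i\cup B_i)$ may lie simultaneously in $B_{i+1}$ and in $A_{i-1}$, so that it can be added to neither $A_i$ nor $B_i$ without breaking an adjacent forbidden pair, and relieving the blockage requires a removal elsewhere whose safety is coupled cyclically around the polygon. Ruling out such deadlocks \emph{globally}, rather than merely locally, is the crux, and it is not clear that the cyclic symmetry alone supplies an invariant strong enough to do so without a temporary increase of $k$. This is precisely why the equality is recorded here as a conjecture rather than a theorem, with the numerical experiments of the next subsection providing the available evidence.
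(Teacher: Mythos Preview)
The statement you are addressing is recorded in the paper as a \emph{conjecture}, not a theorem: the paper does not prove it and does not claim to. It simply observes the trivial direction $\rankh(S_n)\geq\rankb(S_n)$, attributes the reverse inequality to \cite{barefoot1986biclique} as a suggestion, and then moves on to the graph-theoretic reformulation and numerical experiments. There is therefore no ``paper's own proof'' to compare against.

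Your write-up is not really a proof either, and to your credit you say so explicitly in the last paragraph. What you have is a reasonable outline of the obstacle: enlarging $A_i$ or $B_j$ by elements outside the relevant forbidden pairs is always safe, while shrinking is not, and the hard case is the deadlock where a slack coordinate lies in both $B_{i+1}$ and $A_{i-1}$ so neither enlargement is available. That diagnosis is correct, and it is exactly the reason no proof is known. Two small comments on the sketch itself: first, your intermediate goal $|A_i\cup B_i|=k-1$ is not automatic from safe enlargement alone, since nothing prevents you from overshooting to $|A_i\cup B_i|=k$, after which the sizes are forced to $|A_i|+|B_i|=k$ rather than $k-1$ and you would need a removal to reach the homogeneous profile; second, even if every forbidden pair covers exactly $k-1$ coordinates, the equations $|A_i|+|B_i|=|A_i|+|B_{i+1}|=k-1$ only force all $|B_j|$ equal and all $|A_i|$ equal, not that they equal $\ceil{k/2}-1$ and $\floor{k/2}$ specifically, so an additional argument would still be needed. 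In short, the proposal correctly identifies both the easy direction and the genuine difficulty, but it is a discussion of why the conjecture is open rather than a proof.
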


In this section we will give a graph interpretation of the notion of homogeneous boolean rank, and use it to compute some of its values, attaining a few maximizers to the boolean rank of $n$-gons. Some of these numerical experiments were carried out also in \cite{barefoot1986biclique}, but it is not always clear in that paper what conditions were being assumed. We aim to expand those results and clarify some details while also to making available the code and the factorizations attained so that others can verify the results independently or use the factorizations to test conjectures on asymptotic upper bounds for the boolean rank.

Let $C_{n \times k}\times D^T_{n \times k}$ be a homogeneous boolean rank factorization of $S_n$. We will identify each row of $C$ with its support, which in turn can be identified with a vertex of a Johnson graph.
\begin{definition}
For given $n$ and $k$, The Johnson graph $J(n,k)$, is the graph whose vertices are cardinality $k$ subsets of $\{1,\dots,n\}$, with two vertices being adjacent if they share $k-1$ elements.
\end{definition}

\begin{figure}[t]
  \centering
    \centerline{\includegraphics[width=0.4\textwidth]{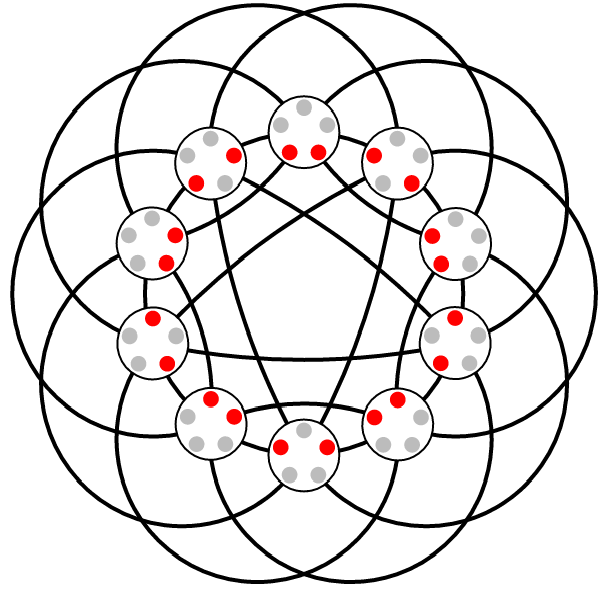}}
	\caption{Johnson graph $J(5,2)$.}
   \label{fig:johnson}
\end{figure}

In Figure \ref{fig:johnson} we can see a representation of $J(5,2)$.
Let $C_i$ and $D_j$ stand for the sets corresponding to rows $i$ and $j$ of matrices $C$ and $D$ respectively. Since $S_n$ has zeros in positions $(l,l)$ and $(l+1,l)$, $D_l$ must be disjoint of $C_l$ and $C_{l+1}$. Since the complement of $D_l$ has $\floor{\frac{k}{2}}+ 1$ elements and both $C_l$ and $C_{l+1}$ are contained there and have $\floor{\frac{k}{2}}$ elements, the fact that they are distinct implies that they must have $\floor{\frac{k}{2}}-1$ elements in common, hence they correspond to vertices of $J(k,\floor{\frac{k}{2}})$ connected by an edge. Therefore $(C_1,C_2,\cdots,C_n,C_1)$ must be a cycle of this Johnson graph.

To get from cycles to factorizations we need extra conditions. Define a coloring on the edges of the Johnson graph by coloring edge $\{S,T\}$ with a color corresponding to the complement of their union, $\overline{S \cup T}$.
If we have a cycle as above, coming from a factorization, the color of edge $\{C_l,C_{l+1}\}$ is precisely $D_l$ which, since all rows of $D$ are distinct, implies that it must have all edges of distinct colors i.e., it must be a \emph{rainbow} cycle. The necessary condition for a rainbow cycle to come from a factorization is simply that given a color of an edge of the cycle, no edge of that color touches any of the remaining vertices of the cycle, as that is precisely the condition for a zero to appear in the factorized matrix. A cycle with that property will be called a \emph{factorizing cycle}. We proved:

\begin{proposition}
For any $n$, $\rankh(S_n)$ is the smallest integer $k$ for which $J(k,\floor{\frac{k}{2}})$ has a length $n$ factorizing cycle.
\end{proposition}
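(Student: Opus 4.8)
The plan is to prove, for every fixed $k$, the sharper statement that $S_n$ admits a homogeneous boolean factorization of size $k$ if and only if $J(k,\floor{k/2})$ contains a length $n$ factorizing cycle; taking the smallest such $k$ on each side then yields the claimed equality of $\rankh(S_n)$ with the stated minimum. The bridge is the dictionary already set up above: a row $C_i$ of the left factor, being a $\floor{k/2}$-subset of $\{1,\dots,k\}$, is a vertex of $J(k,\floor{k/2})$, while a row $D_l$ of the right factor will play the role of the color of an edge. Throughout I use that $(CD^T)_{i,l}=0$ exactly when the supports $C_i$ and $D_l$ are disjoint, and that $\floor{k/2}+(\ceil{k/2}-1)=k-1$, so $|C_i|+|D_l|=k-1$.

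For the forward direction, I would start from a homogeneous boolean factorization $S_n=C\times D^T$ and read off the two structural zeros in each column: since $S_n(l,l)=S_n(l+1,l)=0$, the set $D_l$ is disjoint from both $C_l$ and $C_{l+1}$, so $C_l,C_{l+1}\subseteq\overline{D_l}$, a set of size $\floor{k/2}+1$. Two distinct $\floor{k/2}$-subsets of a $(\floor{k/2}+1)$-set meet in $\floor{k/2}-1$ elements, so $C_l$ and $C_{l+1}$ are adjacent in the Johnson graph and moreover $C_l\cup C_{l+1}=\overline{D_l}$, pinning the edge color to $D_l=\overline{C_l\cup C_{l+1}}$. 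To see that $(C_1,\dots,C_n,C_1)$ is a genuine cycle rather than a closed walk, I would invoke that the rows of the slack matrix of an $n$-gon have pairwise distinct supports; since row $i$ of $C\times D^T$ has support $\{l:C_i\cap D_l\neq\emptyset\}$, an equality $C_i=C_j$ would force equal supports, hence $i=j$. The same argument on columns (whose supports are likewise pairwise distinct) makes the $D_l$ distinct, so the cycle is rainbow, and the requirement that the remaining entries of $S_n$ be nonzero is exactly the factorizing condition, as explained below.

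For the reverse direction, given a length $n$ factorizing cycle $(C_1,\dots,C_n)$ I would let the left factor have rows $C_1,\dots,C_n$ and set $D_l:=\overline{C_l\cup C_{l+1}}$. Adjacency of consecutive vertices gives $|C_l\cup C_{l+1}|=\floor{k/2}+1$, hence $|D_l|=k-\floor{k/2}-1=\ceil{k/2}-1$, so the rows of $C$ and $D$ have the prescribed cardinalities. By construction $C_l\cap D_l=C_{l+1}\cap D_l=\emptyset$, producing precisely the structural zeros of $S_n$, and the rainbow property makes the $D_l$ distinct, as required of a boolean factorization.

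The step I expect to carry the real content, and which I would isolate as a single lemma used in both directions, is the translation of the factorizing property into the correct zero pattern. The key observation is that $C_i\cap D_l=\emptyset$ is equivalent to $C_i\subseteq\overline{D_l}=C_l\cup C_{l+1}$, and since $|C_i|=\floor{k/2}$ while $|C_l\cup C_{l+1}|=\floor{k/2}+1$, such a $C_i$ is precisely a vertex obtained by deleting one element from $C_l\cup C_{l+1}$, i.e. a vertex incident to an edge of color $D_l$. Thus $(CD^T)_{i,l}=0$ for some $i\notin\{l,l+1\}$ if and only if some cycle vertex other than the endpoints of edge $l$ is touched by an edge of that color, which is exactly the failure of the factorizing condition. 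Verifying this equivalence carefully — in particular that no spurious zeros are created off the structural positions — is the crux; once it is in place, both implications close and the two minima coincide.
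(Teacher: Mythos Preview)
Your proposal is correct and follows essentially the same route as the paper: identify the rows of $C$ with vertices of $J(k,\floor{k/2})$, read off from the two structural zeros in each column that consecutive $C_l$'s are adjacent with edge color $D_l=\overline{C_l\cup C_{l+1}}$, and translate the absence of further zeros into the factorizing condition. You are in fact more careful than the paper's surrounding discussion, explicitly justifying why all the $C_i$ (not just consecutive ones) are distinct, spelling out the reverse construction, and isolating the equivalence ``$(CD^T)_{i,l}=0 \iff C_i$ is incident to an edge of color $D_l$'' as the key lemma; the one minor slip is the phrase ``as required of a boolean factorization'' for the distinctness of the $D_l$, which is not part of the definition but is in any case delivered by the rainbow property.
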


In Figures \ref{fig:johnson_cor} and \ref{fig:johnson_ciclo} we can check the above coloring for $k=5$, as well as a factorizing cycle for it.

\begin{figure}[t]
  \centering
  \begin{minipage}[b]{0.4\textwidth}
    \includegraphics[width=\textwidth]{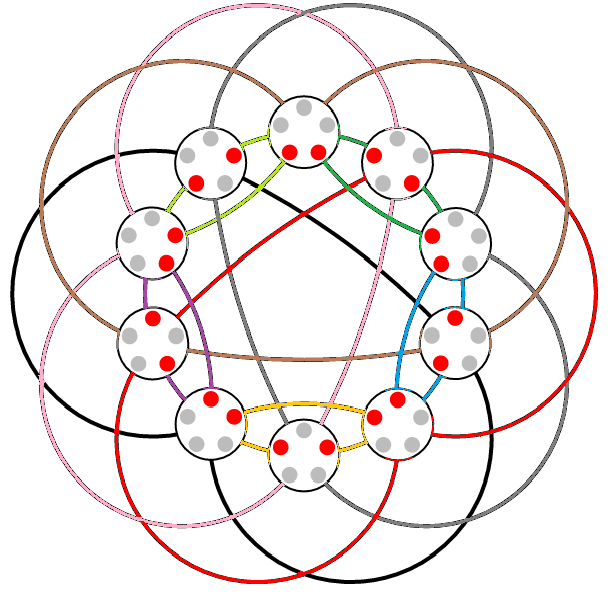}
    \caption{$J(5,2)$ with its coloring.}
    \label{fig:johnson_cor}
  \end{minipage}
  \hfill
  \begin{minipage}[b]{0.4\textwidth}
    \includegraphics[width=\textwidth]{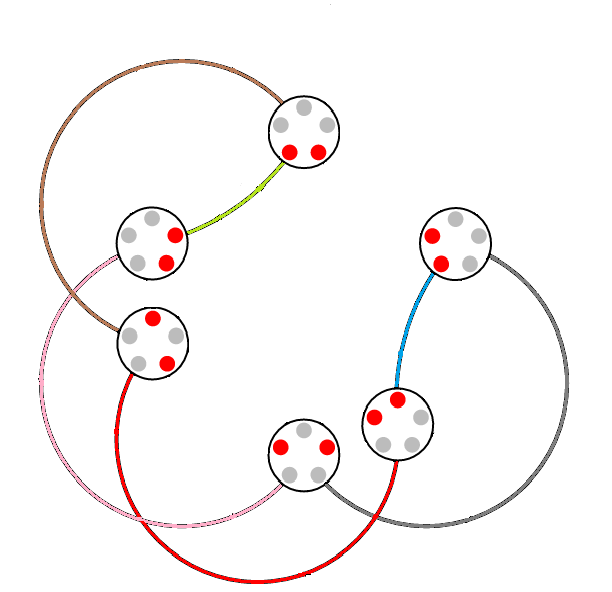}
    \caption{Factorizing cycle for $J(5,2)$.}
    \label{fig:johnson_ciclo}
  \end{minipage}
\end{figure}

\subsection{Numerical results}

Based on the graph interpretation above, we developed a depth search algorithm exploiting symmetries to verify the existence of factorizing cycles in a Johnson graph. The results can be found in Table~\ref{table:carboolhom}.
For some cases (marked with *), we managed to find factorizations of the given size, but could not rule out lower sized factorizations. In \cite{barefoot1986biclique} results are presented until $n=33$, but no details of the computation are presented, so it is unclear if smaller factorizations were ruled out or not. It is also suggested in that paper that the rank is likely $9$ for $n=34$ through $52$. Our results allow an update to that conjecture, suggesting that rank $9$ $n$-gons are likely those with $n=35-55$. The full results and algorithms used are available in \url{www.mat.uc.pt/~jgouveia/polyrank.html}.

\begin{table}[h!]
	\centering
	{\begin{tabular}{|c|c|c|c|c|c|c|c|c|c|c|c|c|c|c|c|c|c|c|c|c|c|c|c|c|c|c|}
	\hline
	$n$ & 3 & 4 & 5-6 & 7-9 & 10-21 & 22-34 & 35-40 & 41-55 & 56 - 78 & 79-91\\ \hline \hline
	$car_{bool}^{hom}(S_n)$ & 3 & 4 & 5 & 6 & 7 & 8 & 9$^*$ & 9 & 10$^*$ & 10\\
    \hline
	\end{tabular}}
	\caption{Homogeneous boolean rank values ($^*$ marks upper bounds not proven to be exact).}
	\label{table:carboolhom}
\end{table}

It is interesting to compare these values with the best known lower bounds. As mentioned before, most usual combinatorial lower bounds are not very effective for the polygon case. In \cite{vandaele2015linear},  Vandaele et al. propose an improvement for the bound $S_n$ specifically in the case of $n$-gons, the bound
$${S(n)}^+:= \min \left\{k: n\leq \frac{k-\floor{k/2}}{k-1}\binom{k}{\floor{k/2}}\right\}.$$
Comparing the values of $\rankh(S_n)$ and those of ${S(n)}^+$, we can see that they coincide for $3-6$, $8-9$, $13-21$,
$24-34$, $41-55$ and $79-91$ so in all these cases the factorizations found are optimal. In particular the true value of the homogeneous boolean rank was computed in those cases and it matches the usual boolean rank.

\section{Positive Semidefinite Ranks}\label{sec:psd}

In general, not very much is known about positive semidefinite ranks of polygons. For the real positive semidefinite rank, it is known that the rank of triangles and quadrilaterals is $3$ and everything else is at least $4$ (see \cite{Gouveia2013}). It is also known that pentagons and hexagons have always rank $4$ (see \cite{Gouveia2015}) and that the regular $8$-gon has semidefinite rank $4$ (see \cite{thesisvand} for an explicit factorization). Apart from that very little is known but numerical observations in \cite{thesisvand} led to the conjecture that $\rankpsd(S_n)=\ceil{\log_2(n)}+1$.

For complex semidefinite rank, nothing is really known, except what is given by the obvious inequality $\rankpsd(S_n) \geq \rp(S_n)$. In this section we make some inroads into this question and in particular we show some unexpected behaviour of this rank. We start by reformulating Proposition 3.2 and Theorem 3.5 of \cite{Gouveia2013} in the complex case. The proofs are omitted since they are virtually the same as those in that paper.

\begin{theorem}
Let $S_{P}$ be the slack matrix of a $d$-polytope $P$. Then $$ \rp S_{P}\ge d+1 .$$ Furthermore, when equality holds, every \psdc-factorization of size $d+1$ of $S_{P}$ uses only rank one matrices as factors.
\end{theorem}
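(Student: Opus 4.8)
The plan is to mimic the real-symmetric argument of \cite{Gouveia2013}, the only real task being to check that each step survives the passage from symmetric to Hermitian matrices. Writing the facet inequalities as $h_i(x)=\langle c_i,x\rangle+e_i$ and the vertices as $p_j$, the slack matrix factors as $S_{P}=FV^{T}$, with the rows of $F$ equal to $(c_i,e_i)$ and the rows of $V$ equal to $(p_j,1)$; since $P$ is a full-dimensional $d$-polytope both factors have rank $d+1$, recovering $\rank S_{P}=d+1$. Given a $\psdc$-factorization $S_{P}(i,j)=\langle A_i,B_j\rangle=\operatorname{tr}(B_j^{*}A_i)$ of size $k$, the single tool I would use is the range--orthogonality lemma: for Hermitian positive semidefinite matrices, $\langle A_i,B_j\rangle=0$ forces $A_iB_j=0$, i.e. $\operatorname{range}(B_j)\subseteq\ker A_i$. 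This is exactly the place where the complex and real arguments coincide, since $\operatorname{tr}(A_i^{1/2}B_jA_i^{1/2})=0$ with a positive semidefinite summand forces $A_i^{1/2}B_j^{1/2}=0$ over $\CC$ just as over $\RR$. So the zero pattern of $S_{P}$, which encodes the face incidences of $P$, translates directly into orthogonality relations between the ranges of the factors.

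Next I would localize at a single vertex. Fix a vertex $v$ and let $F_v$ be the set of facets through it. The zeros $S_{P}(i,v)=0$ for $i\in F_v$ give, via the range--orthogonality lemma, $\operatorname{range}(B_v)\subseteq\bigcap_{i\in F_v}\ker A_i=U_v^{\perp}$, where $U_v:=\sum_{i\in F_v}\operatorname{range}(A_i)$. Since there are facets not through $v$, we have $\langle A_{i'},B_v\rangle>0$ for some $i'$ and hence $B_v\neq 0$, so this yields $k\ge \dim U_v+\rank B_v\ge \dim U_v+1$. The whole statement then reduces to the single claim that $\dim U_v\ge d$ for every vertex $v$: it gives $k\ge d+1$ at once, and in the equality case $k=d+1$ it forces $\rank B_v=1$, so every $B_v$ is rank one. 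Running the dual argument, in which a facet $f$ plays the role above and the vertices on $f$ play the role of the facets through $v$, shows by the same claim (now for $\sum_{j\in V_f}\operatorname{range}(B_j)$) that the $A_i$ are rank one as well. A convenient preliminary that streamlines this is to congruence-normalize: as $P$ is bounded, some nonnegative combination $\sum_i\gamma_ih_i$ is a positive constant, and applying $X\mapsto(\sum_i\gamma_iA_i)^{-1/2}X(\sum_i\gamma_iA_i)^{-1/2}$ to the $A_i$ and the inverse congruence to the $B_j$ preserves positive semidefiniteness and all pairings, so one may assume $\sum_i\gamma_iA_i=I$ and that every $B_j$ is a density matrix.

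The hard part is precisely the claim $\dim U_v\ge d$, and this is where I expect to spend the real effort. The geometric input is clean: since $h_i(p_v)=0$ for $i\in F_v$, all vectors $(c_i,e_i)$ with $i\in F_v$ lie in the hyperplane $(p_v,1)^{\perp}$ and in fact span it, because the facet normals at a vertex span $\RR^{d}$; as $V$ has full column rank, the rows of $S_{P}$ indexed by $F_v$ therefore span a $d$-dimensional space. The obstacle is to convert this information about the linear span of the matrices $\{A_i:i\in F_v\}$ in the space of Hermitian matrices into the desired bound on the dimension of the sum of their \emph{ranges}: naive dimension counting, using only that each $A_i$ with range in a $u$-dimensional subspace lives in a $u^{2}$-dimensional space of Hermitian matrices, yields merely $\dim U_v\ge\sqrt{d}$, which is far too weak to reach $d+1$. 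Closing this gap is the technical heart of the theorem and requires exploiting positive semidefiniteness together with the finer incidence structure exactly as in \cite{Gouveia2013}; since that argument manipulates only ranges, kernels and the trace pairing, it transcribes verbatim with Hermitian matrices replacing symmetric ones and $\operatorname{tr}(B^{*}A)$ replacing $\operatorname{tr}(B^{T}A)$, which is the reason the statement and its proof are identical in the complex setting.
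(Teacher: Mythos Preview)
The paper itself gives no proof: it simply states that Proposition~3.2 and Theorem~3.5 of \cite{Gouveia2013} carry over verbatim to Hermitian matrices and omits all details. So the relevant question is whether your sketch actually reconstructs that argument.

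It does not, and the gap you flag is genuine. You reduce everything to the claim $\dim U_v\ge d$ for $U_v=\sum_{i\in F_v}\operatorname{range}(A_i)$, note that naive dimension-counting gives only $\sqrt d$, and then assert that \cite{Gouveia2013} closes this gap. But the argument there is not a direct attack on $\dim U_v$: it is an induction on $d$ via restriction to a \emph{facet}, and it bypasses your claim entirely. Fix a facet $F$. For every vertex $j\in F$ the range--orthogonality lemma gives $\operatorname{range}(B_j)\subseteq\ker A_F$; writing $\Pi$ for the orthogonal projection onto $\ker A_F$, one has $\langle A_{F'},B_j\rangle=\langle \Pi A_{F'}\Pi,B_j\rangle$. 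Since the slack matrix of the $(d-1)$-polytope $F$ is, up to row scalings, the submatrix of $S_P$ with columns indexed by vertices of $F$ and rows by facets $F'$ adjacent to $F$, this yields a $\psdc$-factorization of $S_F$ of size $\dim\ker A_F=k-\operatorname{rank}A_F\le k-1$. Induction gives $k-1\ge d$, hence $k\ge d+1$; and when $k=d+1$ it forces $\operatorname{rank}A_F=1$ for every facet, with the dual argument handling the $B_j$. This is what ``transcribes verbatim'' to the complex case, and it never compares the linear span of the $A_i$ with the sum of their ranges.

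For what it is worth, your claim $\dim U_v\ge d$ is true and follows from the same facet induction (apply the inductive hypothesis to $v$ as a vertex of $F$ to get $d-1\le\dim\Pi(U_v)=\dim U_v-\operatorname{rank}A_F$), but this is a detour: once you have the facet induction in hand, the theorem is already proved without ever isolating $U_v$.
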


The polytopes for which equality holds are said to be \psdc-minimal. The characterization of these objects can be done using the following theorem, which gives necessary and sufficient conditions for \psdc-minimality.

\begin{theorem}\label{root}
A $d$-polytope $P$ with slack matrix $S_{P}\in \RR^{f\times v}_{+}$ is \psdc-minimal if and only if there exists a matrix $M \in \CC^{f\times v}$ with $\rank M=d+1$ such that $$S_{P}= |M| \odot |M|.$$
\end{theorem}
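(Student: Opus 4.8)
The plan is to set up a dictionary between \psdc-factorizations of $S_P$ by \emph{rank-one} matrices and complex matrices $M$ satisfying $S_P = |M| \odot |M|$, and then run this correspondence in both directions, using the previous theorem to control ranks. The computation underlying everything is that for vectors $u, w \in \CC^{d+1}$ the rank-one Hermitian psd matrices $u u^*$ and $w w^*$ satisfy
\[
\langle u u^*, w w^* \rangle = \operatorname{tr}\!\big(w w^* u u^*\big) = (u^* w)(w^* u) = |u^* w|^2,
\]
so that the entries of a rank-one factorization are exactly the squared moduli of the entries of the product matrix assembled from the factor vectors.

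For the reverse implication, suppose $M \in \CC^{f \times v}$ has $\rank M = d+1$ and $S_P = |M| \odot |M|$. Taking a rank factorization $M = U^* W$ with $U \in \CC^{(d+1)\times f}$ and $W \in \CC^{(d+1)\times v}$, and writing $u_i, w_j$ for the columns of $U, W$, I would set $A_i = u_i u_i^*$ and $B_j = w_j w_j^*$. These are Hermitian psd of size $d+1$, and by the computation above $\langle A_i, B_j\rangle = |u_i^* w_j|^2 = |M_{ij}|^2 = S_P(i,j)$. Hence this is a \psdc-factorization of size $d+1$, giving $\rp S_P \le d+1$; combined with the lower bound $\rp S_P \ge d+1$ from the previous theorem, $P$ is \psdc-minimal.

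For the forward implication, assume $P$ is \psdc-minimal. By the previous theorem every size-$(d+1)$ factorization uses rank-one factors, so $A_i = a_i a_i^*$ and $B_j = b_j b_j^*$ with $a_i, b_j \in \CC^{d+1}$. Defining $M_{ij} = a_i^* b_j$, i.e. $M = A^* B$ with $A = [a_1 \cdots a_f]$ and $B = [b_1 \cdots b_v]$, the same computation gives $S_P = |M| \odot |M|$, and clearly $\rank M \le d+1$. The crux is to promote this to $\rank M = d+1$, and here I would argue by contradiction, exploiting that the correspondence is reversible: if $\rank M = r < d+1$, then factoring $M = U^* W$ through $\CC^r$ and applying the reverse construction produces a \psdc-factorization of $S_P$ of size $r < d+1$, contradicting minimality. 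The main obstacle, and the only nonformal step, is precisely this rank equality; everything else is bookkeeping with the inner product $\langle A, B\rangle = \operatorname{tr}(B^* A)$ and the conjugation conventions, and one must check that the passage from a rank-one factorization to $M$ and the passage from $M$ back to a factorization are genuine mutual inverses, so that the size of the resulting factorization really equals $\rank M$.
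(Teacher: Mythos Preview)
Your argument is correct and is precisely the standard one: it is the complex analogue of the proof of Theorem~3.5 in \cite{Gouveia2013}, which is exactly what the paper invokes (the paper omits the proof, stating that it is ``virtually the same'' as in that reference). The dictionary you set up between rank-one factorizations and rank-$(d+1)$ matrices $M$ with $S_P=|M|\odot|M|$, together with the contradiction step to force $\rank M=d+1$, is the intended route.
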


Here, $\odot$ stands for the Hadamard entrywise product of matrices, while by $|M|$ we mean the matrix whose entries are the absolute values of the entries of $M$. We now recall a tool that was useful for establishing results related to the real psd-minimality of polytopes in the paper \cite{1506.00187}.

\begin{definition}
The \emph{symbolic slack matrix} of a $d$-polytope $P$ is the matrix $S_{P}(x)$, obtained by replacing all positive entries in the slack matrix $S_{P}$ of P with distinct variables $x_{1},\dots,x_{t}$.
\end{definition}

Note that the slack matrix of a polytope $P$ can be recovered by evaluating the symbolic one for some particular $\xi = (\xi_{1},\dots, \xi_{t}) \in \RR^{t}_{+}$. Also, the slack matrix of any other polytope $Q$ which is combinatorially equivalent to $P$ can, up to permutations of rows and columns, be obtained in this way for some $\xi \in \RR^{t}_{+}$.

\begin{example}\label{pent}
The slack matrix $S_{P}$ and the symbolic slack matrix $S_{P}(x)$ of the regular pentagon are:
$$
S_{P}=
\begin{pmatrix}
0 & 0 & 1 & \varphi &1 \\1 & 0 & 0 & 1 & \varphi \\ \varphi &1  & 0 & 0 & 1 \\1 & \varphi & 1 & 0 & 0 \\0 & 1 & \varphi & 1 & 0
\end{pmatrix} \qquad
S_{P}(x)=
\begin{pmatrix}
0 & 0 & x_{11} & x_6 & x_{1} \\
 x_2 & 0 & 0 & x_{12} & x_{7} \\
 x_8 & x_3 & 0 & 0 & x_{13}\\
x_{14} &  x_{9} & x_4 & 0 & 0\\
0 & x_{15} & x_{10} & x_{5} &0
\end{pmatrix},
$$
where the $\varphi$ is the golden ratio.
In this case, $S_{P}$ can be obtained as $S_{P}(\xi)$ with $\xi=(1,1,1,1,1,\varphi ,\varphi ,\varphi ,\varphi ,\varphi ,1,1,1,1,1)$.
\end{example}

\begin{remark} \label{scaling}
As the \psdc-minimality of a slack matrix $S_{P}$ is invariant under scalings of rows and columns, it is possible to make some of its entries be equal to one. This means we may assume that several of the variables in the corresponding slack matrix $S_{P}(x)$ have also been set to one. For instance, the symbolic slack matrix $S_{P}(x)$ of the pentagon in Example \ref{pent} can be reduced to $S_{P}'(x)$.
$$S_{P}'(x)=\begin{pmatrix}
0 & 0 & 1 & x_{1} & 1  \\
1 & 0 & 0 & 1& x_{2}  \\
x_{3} & 1 & 0 & 0 & 1  \\
1 & x_{4} & 1 & 0 & 0  \\
0 & 1 & x_{5} & x_{6} & 0
\end{pmatrix}$$
Likewise, any complex matrix $M$ such that $S_{P}= |M| \odot |M|$ and in the conditions of Theorem \ref{root} can be rescaled in the same way. In fact, this procedure does not change the rank of M nor the \psdc-rank of $S_{P}$. This happens because the changes in the absolute values of the entries of $M$ just correspond to rescalings of $S_{P}$ and the changes in their complex phase are not relevant. That means for any matrix $M$ in the conditions of Theorem \ref{root}, there will be a  version $M'$ scaled exactly like $S_{P}$.
This version can be obtained as $M'=S_{P}'(\zeta)$, for some $\zeta=(\zeta_{1},\dots,\zeta_{t}) \in \CC^{t}$.
Because of this, we will allow, in what follows, the symbolic slack matrix $S_{P}(x)$ of a polytope $P$ to be evaluated using a complex vector $\zeta\in\CC^{t}$.
\end{remark}

Unfortunately, the tools developed in \cite{1506.00187} to study the real case, fail to extend to the complex case in any meaningful way. We develop instead an alternate weaker obstruction that works in the complex case, and that will prove fundamental to attain our new results.

\begin{lemma}[Combined Trinomial Obstructions]\label{TO}
Consider a \psdc-minimal $d$-polytope $P$ with slack matrix $S_{P}$. If its symbolic slack matrix $S_{P}(x)$ has a $d+2$-minor that is of the form $x^{a}-x^{b}+x^{c} , a,b,c\in \NN^{t}$, then, for every $\zeta \in \CC^{t}$ such that $S_{P}=|S_{P}(\zeta)|\odot |S_{P}(\zeta)|$, with $\rank S_{P}(\zeta)=d+1$, we have $$ \Re\left( \frac{\zeta^{c}}{\zeta^{a}} \right)=0.$$
\end{lemma}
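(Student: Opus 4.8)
The plan is to play the complex rank hypothesis against the ordinary (real) one, producing two relations among the three monomial evaluations that together force the stated identity. Throughout I write $\alpha=\zeta^{a}$, $\beta=\zeta^{b}$, $\gamma=\zeta^{c}$, and I note that $\alpha,\beta,\gamma\neq 0$, since each coordinate $\zeta_\ell$ satisfies $|\zeta_\ell|^2=(S_P)_{\mathrm{pos}(\ell)}>0$, as the variables of $S_P(x)$ sit in the positive entries of $S_P$.

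First I would exploit the complex side. By hypothesis a suitable $(d+2)\times(d+2)$ minor of $S_{P}(x)$ equals the polynomial $x^{a}-x^{b}+x^{c}$, so evaluating at $\zeta$ gives the value of the corresponding minor of the numerical matrix $S_{P}(\zeta)$. Since $\rank S_{P}(\zeta)=d+1$, every $(d+2)\times(d+2)$ minor of it must vanish, and therefore
$$\alpha-\beta+\gamma=0, \quad\text{so}\quad \beta=\alpha+\gamma.$$

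Next I would extract a modulus relation from the real slack matrix. The key observation is that each monomial occurring in a determinant expansion is squarefree, picking exactly one variable from each row and column; writing $\xi\in\RR^{t}_{+}$ for the real vector with $S_{P}=S_{P}(\xi)$, the defining condition $S_{P}=|S_{P}(\zeta)|\odot|S_{P}(\zeta)|$ gives $|\zeta_\ell|^2=\xi_\ell$ for every $\ell$, whence $|\alpha|^{2}=\xi^{a}$, $|\beta|^{2}=\xi^{b}$ and $|\gamma|^{2}=\xi^{c}$. But $\xi^{a}-\xi^{b}+\xi^{c}$ is precisely the same symbolic minor evaluated at $\xi$, that is, a genuine $(d+2)\times(d+2)$ minor of the ordinary slack matrix $S_{P}$; as $S_{P}$ always has rank $d+1$ this minor vanishes as well, yielding
$$|\alpha|^{2}-|\beta|^{2}+|\gamma|^{2}=0.$$

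Finally I would combine the two. Substituting $\beta=\alpha+\gamma$ into the modulus relation and expanding $|\alpha+\gamma|^{2}=|\alpha|^{2}+|\gamma|^{2}+2\Re(\alpha\overline{\gamma})$ forces $\Re(\alpha\overline{\gamma})=0$. Since $\Re(\zeta^{c}/\zeta^{a})=\Re(\gamma\overline{\alpha})/|\alpha|^{2}$ and $\Re(\gamma\overline{\alpha})=\Re(\overline{\alpha\overline{\gamma}})=\Re(\alpha\overline{\gamma})=0$, I conclude $\Re(\zeta^{c}/\zeta^{a})=0$, as claimed. The conceptual heart of the argument — rather than any computational obstacle — is the passage in the third paragraph: squaring moduli converts the complex evaluation of the minor into the \emph{real} evaluation of the very same symbolic minor, so that the universally available rank-$(d+1)$ property of the slack matrix supplies exactly the second relation needed. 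The only point requiring care is that the three monomials are squarefree transversals, which is what makes $|\zeta^{m}|^{2}=\xi^{m}$ hold on the nose; once this is observed the rest is purely formal.
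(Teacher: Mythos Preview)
Your proof is correct and follows essentially the same line as the paper's: vanish the symbolic minor at both $\zeta$ and at the real parameter $\xi$, then substitute $\beta=\alpha+\gamma$ into the modulus relation to extract $\Re(\gamma/\alpha)=0$. One small remark: the squarefreeness observation, while true, is not actually needed---the identity $|\zeta^{m}|^{2}=\xi^{m}$ follows directly from $|\zeta_\ell|^{2}=\xi_\ell$ for any $m\in\NN^{t}$, squarefree or not.
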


\begin{proof}
As $S_{P}(\zeta)$ and $S_{P}=S_{P}(\xi)$, for some $\xi \in \RR^{t}$, both { have rank $d+1$}, all their $d+2$ minors are identically zero. This means we have $\zeta^{a}-\zeta^{b}+\zeta^{c}=0$ and $\xi^{a}-\xi^{b}+\xi^{c}=|\zeta^{a}|^{2}-|\zeta^{b}|^{2}+|\zeta^{c}|^{2}=0$, because $S_{P}(\xi)=|S_{P}(\zeta)|\odot |S_{P}(\zeta)|$. The first equation can be put in the form, $\zeta^{b}=\zeta^{a}+\zeta^{c}$, and, substituting back in the second, we get $\conj{\zeta^{a}}\zeta^{c} + \zeta^{a}\conj{\zeta^{c}}=0$. Dividing this by $2\conj{\zeta^{a}}\zeta^{a}$, we get the result.
\end{proof}

\begin{corollary}
The pentagon is not \psdc-minimal.
\end{corollary}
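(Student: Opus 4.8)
The plan is to assume the pentagon $P$ is \psdc-minimal and to derive a contradiction by feeding a few well-chosen trinomial minors into Lemma \ref{TO}. By Theorem \ref{root} together with Remark \ref{scaling}, \psdc-minimality produces a vector $\zeta=(\zeta_1,\dots,\zeta_6)\in\CC^6$ with $\rank S_P'(\zeta)=3$ and with each $|\zeta_j|^2$ equal to the corresponding entry of the pentagon's slack matrix; in particular $|\zeta_1|=|\zeta_3|=|\zeta_4|=\sqrt{\varphi}\neq 0$. I work throughout with the reduced symbolic slack matrix $S_P'(x)$ of Remark \ref{scaling}, which is legitimate: the reduction is just a rescaling of rows and columns, so $\rank S_P'(\zeta)=3$ and the real rescaled matrix $S_P'(\xi')$ with $\xi'_j=|\zeta_j|^2$ also has rank $3$, and the computation proving Lemma \ref{TO} then applies verbatim to minors of $S_P'$.

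First I would scan the twenty-five $4\times 4$ minors of $S_P'(x)$ for those of the prescribed shape $x^a-x^b+x^c$. Three of them suffice: deleting row $5$ and column $5$ gives $x_3x_4-x_1-1$, deleting row $2$ and column $2$ gives $x_1x_5-x_3x_6-x_6$, and deleting row $5$ and column $1$ gives $x_1x_2-x_4-1$. Each equals, up to an overall sign, a trinomial $x^a-x^b+x^c$, so Lemma \ref{TO} applies to every one of them. Reading off the exponents and simplifying the resulting conditions $\Re(\zeta^c/\zeta^a)=0$ — using $\Re(1/w)=\Re(w)/|w|^2$ to convert $\Re(1/\zeta_3)=0$ into $\Re(\zeta_3)=0$ and likewise for $\zeta_4$ — these three minors yield precisely $\Re(\zeta_1)=\Re(\zeta_3)=\Re(\zeta_4)=0$, i.e. $\zeta_1,\zeta_3,\zeta_4$ are all purely imaginary.

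To close the argument I would then invoke the \emph{actual} vanishing of the first minor: since $\rank S_P'(\zeta)=3$ we must have $\zeta_3\zeta_4-\zeta_1-1=0$. As $\zeta_3$ and $\zeta_4$ are purely imaginary, their product $\zeta_3\zeta_4$ is real, so comparing imaginary parts forces $\Im(\zeta_1)=0$; combined with $\Re(\zeta_1)=0$ this gives $\zeta_1=0$, contradicting $|\zeta_1|=\sqrt{\varphi}$. Hence $P$ cannot be \psdc-minimal.

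The subtle point — and the reason the lemma is phrased in terms of \emph{combined} obstructions — is that no single trinomial minor is enough. Knowing only that $\zeta_1$ is imaginary leaves $\zeta_3\zeta_4-\zeta_1-1=0$ entirely consistent, precisely because the golden ratio satisfies $\varphi^2=\varphi+1$: a number $\zeta_3\zeta_4$ of modulus $\varphi$ can have real part $1$ and imaginary part $\pm\sqrt{\varphi}$, matching $1+\zeta_1$. Only once the second and third minors pin down the phases of $\zeta_3$ and $\zeta_4$ is $\zeta_3\zeta_4$ forced to be real, so that the contradiction surfaces. I expect the bulk of the work to be the minor bookkeeping: computing the $4\times4$ determinants, isolating the three that are genuinely trinomial (most expand to four or five monomials), and keeping the exponent vectors $a,b,c$ straight when extracting the phase conditions from Lemma \ref{TO}.
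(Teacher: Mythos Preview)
Your proof is correct and follows essentially the same route as the paper: both pick three trinomial $4\times 4$ minors of $S_P'(x)$, use Lemma~\ref{TO} to force three of the $\zeta_j$ to be purely imaginary, and then read off a contradiction from the vanishing of one of those very minors. The only difference is cosmetic: the paper uses $m_{5,5},m_{5,1},m_{3,4}$ to make $\zeta_1,\zeta_4,\zeta_2$ imaginary and plugs into $m_{5,1}$, whereas you use $m_{5,5},m_{5,1},m_{2,2}$ to make $\zeta_1,\zeta_4,\zeta_3$ imaginary and plug into $m_{5,5}$ --- by the cyclic symmetry of the pentagon these are the same argument.
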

\begin{proof}
Assume the pentagon with slack matrix $S_{P}$ is \psdc-minimal and consider its symbolic slack matrix $S_{P}(x)$ given in Remark \ref{scaling}. The 4-minors $m_{i,j}(x)$ of $S_{P}(x)$ obtained by deleting the $i$th row and the $j$th column, for $(i,j)=(5,5),(5,1), (3,4)$, are, respectively and up to sign,
\begin{align*}
m_{5,5}(x)&=x_{1}-x_{3}x_{4}+1\\
m_{5,1}(x)&=x_{4}-x_{1}x_{2}+1\\
m_{3,4}(x)&=x_{2}-x_{4}x_{5}+1.
\end{align*}

By Lemma \ref{TO}, the complex entries $\zeta_{1}, \zeta_{2}$ and $\zeta_{4}$ of any matrix $S_{P}(\zeta), \zeta \in \CC^{t},$ with $\rank S_{P}(\zeta)=d+1$ such that $S_{P}=|S_{P}(\zeta)|\odot |S_{P}(\zeta)|$, are pure imaginary.
 Also, all of the $S_{P}(\zeta)$ 4-minors are identically zero. In particular $m_{5,1}(\zeta)=\zeta_{4}-\zeta_{1}\zeta_{2}+1=0$, which is a contradiction. This denies the existence of any matrix $M$ in the conditions of Theorem \ref{root}.
\end{proof}

\begin{remark}
The proof above cannot be extended directly to $n$-gons with $n>5$. In fact, if one considers the slack matrix of the regular hexagon $S_{P}$, it is possible to find a matrix $M\in \CC^{6\times6}$ with $\rank M=3$ such that $S_{P}=|M|\odot|M|$. According to Theorem \ref{root}, this is equivalent to say that this polygon is \psdc-minimal. The matrices $S_{P}$ and $M$ are:
$$
S_{P}=
\begin{pmatrix}
0 & 0 & 1 & 2 & 2 & 1 \\
1 & 0 & 0 & 1 & 2 & 2 \\
2 & 1 & 0 & 0 & 1 & 2 \\
2 & 2 & 1 & 0 & 0 & 1 \\
1 & 2 & 2 & 1 & 0 & 0 \\
0 & 1 & 2 & 2 & 1 & 0
\end{pmatrix}
\quad
M=
\begin{pmatrix}
0&0&1&\sqrt{2}&\sqrt{2}&1\\
1&0&0&1&1-i&\sqrt{2}\\
1+i&1&0&0&1&\sqrt{2}i\\
\sqrt{2}i&\sqrt{2}i&-1&0&0&-1\\
1&1+i&\sqrt{2}i&1&0&0\\
0&1&\sqrt{2}&1-i&1&0\\
\end{pmatrix}.
$$
In fact, this is contrary to what happens in the real case where the \psd-minimal polygons are just triangles and quadrilaterals.
\end{remark}

We have shown that $\rp(S_3)=\rp(S_4)=\rp(S_6)=3$ while also showing $\rp(S_5)\ge 4$. Since $4=\rankpsd(S_5)\ge \rp(S_5)$ we actually know that $\rp(S_5)=4$. This shows a very interesting property of $\rp(S_n)$: it is not an increasing sequence. This is very interesting in that while the sequence of ranks of regular $n$-gons is widely thought to be increasing for the nonnegative, boolean and real semidefinite ranks, that fact was not ever proved for any of them, and we just proved it false in the complex semidefinite case. This gives evidence that one should be very careful when assuming, even implicitly, such type of behaviour in numerical experiments.

\bibliography{bibliografia}
\bibliographystyle{alpha}

\end{document}